\newcommand{\leqnomode}{\tagsleft@true}
\newcommand{\reqnomode}{\tagsleft@false}
\def \R{\mathbb{R}}
\def \E{\mathbb{E}}
\def \P{\mathbb{P}}
\theoremstyle{plain} 
\newtheorem{thm}{Theorem}[section] 
\newtheorem{lem}[thm]{Lemma} 
\newtheorem{prop}[thm]{Proposition} 
\newtheorem{hp}[thm]{Hypotheses} 
\newtheorem{defn}[thm]{Definition}
\theoremstyle{definition} 
\numberwithin{equation}{section}
\newcommand{\miezz}{\frac{1}{2}}
\newcommand{\eps}{\varepsilon}
\newcommand{\into}{\ensuremath{\int_{\Omega}}}
\newcommand{\inti}{\ensuremath{\int_{0}^{t}\int_{\Omega}}}
\newcommand{\intif}{\ensuremath{\int_{0}^{T}\int_{\Omega}}}
\newcommand{\norm}[1]{\ensuremath{\left\Arrowvert #1 \right\Arrowvert}}
\newcommand{\norminf}[1]{\ensuremath{\left\Arrowvert #1 \right\Arrowvert_\infty}}
\newcommand{\indic}{\mathlarger{\mathbbm{1}}}
\newcommand{\dm}[1]{\ensuremath{\frac{\delta #1}{\delta m}}}
\newcommand{\dw}{\mathbf{d}_1}
\newcommand{\supt}{\sup\limits_{t\in[t_0,T]}}
\newcommand{\supo}{\sup\limits_{t\in[0,T]}}
\newcommand{\bdone}[1]{a(x)D#1\cdot\nu_{|\partial\Omega}=0}
\newcommand{\amu}{_{\frac{1+\alpha}{2},1+\alpha}}
\newcommand{\amd}{_{1+\frac{\alpha}{2},2+\alpha}}
\newcommand{\amv}{_{1,2+\alpha}}
\newcommand{\bo}[1]{\boldsymbol{#1}}
\newcommand{\meh}{m_{\bo{x}}^{N,i}}
\newcommand{\ui}{U_t^{N,i}}
\newcommand{\vi}{V_t^{N,i}}
\newcommand{\duij}{DU_t^{N,i,j}}
\newcommand{\dvij}{DV_t^{N,i,j}}
\newcommand{\duii}{DU_t^{N,i,i}}
\newcommand{\dvii}{DV_t^{N,i,i}}
\newcommand{\dujj}{DU_t^{N,j,j}}
\newcommand{\dvjj}{DV_t^{N,j,j}}
\newcommand{\uis}{U_s^{N,i}}
\newcommand{\vis}{V_s^{N,i}}
\newcommand{\duijs}{DU_s^{N,i,j}}
\newcommand{\dvijs}{DV_s^{N,i,j}}
\newcommand{\duiis}{DU_s^{N,i,i}}
\newcommand{\dviis}{DV_s^{N,i,i}}
\newcommand{\dujjs}{DU_s^{N,j,j}}
\newcommand{\dvjjs}{DV_s^{N,j,j}}
\newcommand{\be}{\begin{equation}}
	\newcommand{\ee}{\end{equation}}
\newcommand{\espot}{e^{-\alpha(\delta t+d(x)+d(y))}}
\newcommand{\espos}{e^{-\alpha(\delta s+d(x)+d(y))}}
\begin{document}
	
	\title{THE CONVERGENCE PROBLEM IN MEAN FIELD GAMES WITH NEUMANN BOUNDARY CONDITIONS}
	
	\author{Michele Ricciardi}\thanks{King Abdullah University of Sciences and Technologies (KAUST). Thuwal, Saudi Arabia.}

	\date{\today}

	\maketitle
	
	\begin{abstract}
		In this article we study the convergence of the Nash Equilibria in a $N$-player differential game towards the optimal strategies in the Mean Field Games, when the dynamic of the generic player includes a reflection process which guarantees the invariance of the state space $\Omega$. The well-posedness of the Master Equation allows us to use its solution $U$ in order to construct finite dimensional projections $u^N_i$, which will converge, in some suitable spaces, to the solution of the Nash system $v^N_i$.
	\end{abstract}

	\section{Introduction}
	
	This article is related to the convergence of Nash Equilibria in a $N$-players differential game, through the use of the so-called \emph{Master Equation}.
	
	The asymptotic behaviour of an $N$-players differential game is typically described by the Mean Field Games system, whose theory was introduced by J.-M- Lasry and P.-L.- Lions in 2006 (\cite{LL1SI, LL2SI, LL-japanSI, LL3SI}),  and in the same years by Caines, Huang and Malham\'{e}, see \cite{HCMSI}. Conversely, the study of the convergence problem is very often approached with an infinite dimensional equation equivalent to the Mean Field Games system: the Master Equation, whose definition was given by P.-L. Lions in his lectures at Coll\`{e}ge de France, \cite{prontoprontoprontoSI}.
	
	The complete novelty of our results is in the boundary conditions. The convergence problem is typically studied when the state space of the agents is the torus $\mathbb{T}^d$, or, especially in the probabilistic literature, the whole space $\R^d$. But in many applied models it is important to consider situations when the state space is a bounded domain $\Omega$, with conditions that force the trajectories of the players to remain in the domain. See, for example, the models proposed in \cite{golSI, gueantSI}.
	
	In our case the invariance of the domain is obtained with a \emph{reflections process at the boundary}. Namely, the dynamic of the player $i$, with $1\le i\le N$, is given by the following stochastic differential equation:
	
	\begin{equation}\label{dyn}
	\begin{cases}
	dX_t^i=b(X_t^i,\alpha_t^i)\,dt+\sqrt 2\sigma(X_t^i)dB_t^i-dk_t^i\,,\\
	X_{t_0}^i=x_0^i\,,
	\end{cases}
	\end{equation}
	where $\alpha_t^i$ is the control, chosen from a certain set $A$, $b:[0,T]\times\Omega\times A\to\R^d$ is the \emph{drift} function and $\sigma:\Omega\to\R^{d\times d}$ is the \emph{diffusion} matrix of the process.
	
	Moreover, $(B_t)^i$, $1\le i\le N$ are independent $d$-dimensional Brownian motions, $x_0^i\in\Omega$ and $k_t^i$ is a \emph{reflected process along the co-normal.} According to \cite{snitzmanSI}, this reflected process satisfies the following properties:
	$$
	k_t^i=\int_0^t a(X_s^i)\nu(X_s^i)\, d|k|_s^i\,,\qquad |k|_t^i=\int_0^t\indic_{X_s^i\in\partial\Omega}\, d|k|_s^i\,,
	$$
	where $a=\sigma\sigma^*$ and $\nu$ is the outward normal at $\partial\Omega$. This reflection along the co-normal forces the process to stay into $\Omega$ for all $t\ge0$.
	
	There is an extensive literature about stochastic differential equations with reflection, and existence results were already obtained, for example, in \cite{dieciSI, elshaaSI, 55SI, snitzmanSI, 63SI, 63bisSI, 64SI, 65SI}, so we will not discuss about it in this article.
	
	From now on, we will use the notation $\mathbf{v}$ to indicate a vector of $\R^{Nd}$ defined by $\mathbf{v}=(v^1,\dots,v^N)$, where $v^i$ is an already defined vector of $\R^d$.
	
	Assume that the cost for the player $i$ is given by the following functional:
	$$
	J^N_i(t_0,\bo{x}_0,\bo\alpha.)=\E\left[\int_{t_0}^T\left(L(X_s^i,\alpha_s^i)+F(X_s^i,m_{\bo{X_s}}^{N,i})\right)\,ds+G(X_T^i,m_{\bo{X_T}}^{N,i})\right]\,,
	$$
	
	where $L$ is the \emph{Lagrangian} cost for the control and $F$ and $G$ are the cost functions of the player $i$. Of course, there must be a symmetry structure for the system in order to have a convergence for $N\to+\infty$. Hence, we assume that the cost functions depend on the trajectory of the player $i$ and the empirical distribution of the other players, defined as follows:
	\begin{equation}\label{empirical}
	m_{\bo x}^{N,i}=\frac 1{N-1}\sum_{j\neq i}\delta_{x_j}\,,\quad\hbox{where $\delta_{x_j}$ is the Dirac function at $x_j$}\,.
	\end{equation}

	With these notations, a control $\bo{\alpha^*_\cdot}$ provides a Nash equilibrium if, for all controls $\bo\alpha.$ and for all $i$ we have
	$$
	J^N_i(t_0,\bo{x}_0,\bo{\alpha^*_\cdot})\le J^N_i\left(t_0,\bo{x}_0,\alpha_i,{(\alpha^*_j)}_{j\neq i}\right)\,,
	$$
	i.e., each player chooses his optimal strategy, if the other agents have chosen the control provided by the Nash equilibrium. The value function for the generic player $i$ corresponds to the cost functional evaluated at the optimal control:
	$$
	v^N_i(t_0,\bo x_0)=J^N_i(t_0,\bo x_0,\bo{\alpha^*_\cdot})\,.
	$$
	Using Ito's formula and the dynamic programming principle, one can prove the following: $\bo{\alpha^*}$ provides a Nash equilibrium if the related functions $v^N_i$ solve the so-called \emph{Nash system}:
	\begin{equation}\label{nash}
	\begin{cases}
	-\partial_t v^N_i(t,\bo x)-\mathlarger{\sum}\limits_j \mathrm{tr}(a(x_j)D^2_{x_jx_j}v^N_i(t,\bo x)) +H(x_i,D_{x_i}v^N_i(t,\bo x))\\
	\hspace{2.2cm}+\,\,\mathlarger{\sum}\limits_{j\neq i}H_p(x_j,D_{x_j}v^N_j(t,\bo x))\cdot D_{x_j}v^N_i(t,\bo x)=F(\bo x,m_{\bo x}^{N,i})\,,\\
	v^N_i(T,\bo x)=G(\bo x,m_{\bo x}^{N,i})\,,\\
	a(x_j)D_{x_j}v^{N,i}(t,\bo x)\cdot\nu(x_j)_{|x_j\in\partial\Omega}=0\,,
	\end{cases}
	\end{equation}
	for $(t,\bo x)\in[0,T]\times\R^{Nd}$. Here $a=\sigma\sigma^*$, $H_p(x,p)$ denotes $\frac{\partial H(x,p)}{\partial p}$, for $p\in\R^N$ and $H$ is the \emph{Hamiltonian} of the system, i.e. a slight variation of the Fenchel conjugate of the Lagrangian:
	$$
	H(x,p):=\sup\limits_{a\in A}\big(-b(x,\alpha)\cdot p- L(x,\alpha)\big)\,.
	$$
	
	Existence of solutions for this system is well known under some hypotheses of regularity and growth of the coefficients, see \cite{seiSI, lsuSI}.
	
	However, the structure of the $N$-players game becomes really intricate when $N\gg1$, and in that case we are naturally interested in an asymptotic behaviour of \eqref{nash} as $N\to+\infty$, in order to simplify the configuration of the Nash system.
	
	If we want to describe, at least heuristically, the structure of this limit problem when $N\to +\infty$, we find a differential game with infinitely many players, where the dynamic of a generic player is driven by a stochastic differential equation of this type:
	$$
	\begin{cases}
	dX_t=b(X_t,\alpha_t)\,dt+\sqrt 2\sigma(X_t)\,dB_t-dk_t\,,\\
	X_{t_0}=x_0\,,
	\end{cases}
	$$
	and each player chooses his own strategy in order to minimize
	$$
	J(t_0,x_0,\alpha_\cdot)=\E\left[\int_{t_0}^T\big(L(s,X_s,\alpha_s)+F(X_s,m(s))\big)\,ds+G(X_T,m(T))\right]\,,
	$$
	where $m(\cdot)$ is the density of the populations, obtained by the convergence of $m_{\bo x}^{N,i}$.
	
	In this case the Mean Field Games system takes the following form:
	\begin{equation}\label{mfg}
	\begin{cases}
	-u_t-\mathrm{tr}(a(x)D^2 u)+H(x,Du)=F(x,m(t))\,,\\
	m_t-\sum\limits_{i,j}\partial^2_{ij}(a_{ij}(x)m)-\mathrm{div}(mH_p(x,Du))=0\,,\\
	m(t_0)=0\,,\qquad u(T,x)=G(x,m(T))\,,\\
	\bdone{u}\,,\qquad \Big(\big(\sum\limits_j\partial_j(a_{ij}(x)m)\big)_i+H_p(x,Du)m\Big)\cdot \nu_{|\partial\Omega}=0\,,
	\end{cases}
	\end{equation}
	where a backward Hamilton-Jacobi-Bellman equation for the value function $u(t,x):=\inf\limits_\alpha J(t,x,\alpha)$ is coupled with a forward Fokker-Planck equation for the density of the population $m(\cdot)$.
	
	Once proved the results on the Mean Field Games problem, one naturally asks if this system can be a good approximation of the $N$-players system.
	
	In this context, two kind of results can be shown:
	\begin{itemize}
		\item [(i)] The optimal strategies in the Mean Field Games system provide approximated Nash equilibria (called $\eps$-Nash equilibria) in the $N$-players game.
		\item [(ii)] A Nash equilibrium in the $N$-player game converges, when $N\to+\infty$, towards an optimal strategy in the Mean Field Games.
	\end{itemize}

	The first question has been widely studied, using specific tools of the Mean Field theory. See, for instance, \cite{resultunoSI,resultdueSI,resulttreSI}, whereas many difficulties arise in the analysis of the second question, due to the lack of compactness properties of the problem.
	
	In order to overcome this problem, Lasry and Lions in \cite{prontoprontoprontoSI} introduced a new infinite dimensional equation, the so-called \emph{\textbf{Master Equation}}, which summarizes the whole Mean Field Games system in a unique equation and is clearly connected with the Nash system.
	
	The Master Equation is defined from its trajectories, which are solutions of the Mean Field Games system: if $(u,m)$ solves \eqref{mfg} with initial condition $m(t_0)=m_0$, we define
	\begin{equation}\label{defU}
	U:[0,T]\times\Omega\times\mathcal{P}(\Omega)\to\R\,,\qquad U(t_0,x,m_0)=u(t_0,x)\,,
	\end{equation}
	where $\Omega\subseteq\R^d$ is a bounded set, whose properties will be discussed later, and $\mathcal{P}(\Omega)$ is the set of Borel probability measures on $\Omega.$
	
	If we compute, at least formally, the equation satisfied by $U$, we obtain a non-local transport equation in the space of measures, called the \emph{Master Equation}:
	\begin{equation}\begin{split}
		\label{ME}
		\left\{
		\begin{array}{rl}
			&-\,\partial_t U(t,x,m)-\mathrm{tr}\left(a(x)D_x^2 U(t,x,m)\right)+H\left(x,D_x U(t,x,m)\right)\\&-\mathlarger{\into}\mathrm{tr}\left(a(y)D_y D_m U(t,x,m,y)\right)dm(y)\\&+\mathlarger{\into} D_m U(t,x,m,y)\cdot H_p(y,D_x U(t,y,m))dm(y)= F(x,m)\\&\mbox{in }(0,T)\times\Omega\times\mathcal{P}(\Omega)\,,\vspace{0.4cm}\\
			&U(T,x,m)=G(x,m)\hspace{1cm}\mbox{in }\Omega\times\mathcal{P}(\Omega)\,,\vspace{0.2cm}\\
			&a(x)D_x U(t,x,m)\cdot\nu(x)=0\hspace{1cm}\quad\,\mbox{for }(t,x,m)\in(0,T)\times\partial\Omega\times\mathcal{P}(\Omega)\,,\\
			&a(y)D_m U(t,x,m,y)\cdot\nu(y)=0\hspace{1cm}\mbox{for }(t,x,m,y)\in(0,T)\times\Omega\times\mathcal{P}(\Omega)\times\partial\Omega\,,
		\end{array}
		\right.
	\end{split}\end{equation}
	where $D_mU$ is a suitable derivative of $U$ with respect to the measure $m$ and where two Neumann boundary conditions for $U$ are satisfied, in the space and in the measure variable.
	
	The second boundary condition, introduced and studied in \cite{memedesimoSI}, is not a surprising fact: actually, the symmetric structure of the problem (i.e. the symmetry assumptions on $F$ and $G$) implies an exchangeability property for the functions $v^N_i$, which turn out to be invariant under permutations of ${(x_j)}_{j\neq i}$. Actually, one can prove that there exists a function $v^N:[0,T]\times\Omega\times\mathcal{P}(\Omega)$ such that
	\begin{equation}\label{vN}
	v^N_i(t,\bo{x})=v^N(t,x_i,\meh)\,.
	\end{equation}
	Hence, the boundary conditions of the Nash system \eqref{nash} are strictly related to the ones of the Master Equation \eqref{ME}. Loosely speaking:
	\begin{itemize}
		\item $a(x_i)D_{x_i}v^N_i(t,\bo x)\cdot\nu(x_i)_{|x_i\in\partial\Omega}=0$ corresponds to a Neumann boundary condition in the space variable, since $v^N$ depends on $x_i$ in the space variable;
		\item $a(x_j)D_{x_j}v^N_i(t,\bo x)\cdot\nu(x_j)_{|x_j\in\partial\Omega}=0$, for $j\neq i$, corresponds in the Master Equation to a Neumann boundary condition in $m$, since the dependence of $v^N$ with respect of $x_j$ is in the last variable, which is a measure variable.
	\end{itemize}

	Once defined the Master Equation, there are two main steps which must be handled:
	\begin{itemize}
		\item [(i)] Prove the well-posedness of the Master Equation: existence, uniqueness and regularity of solutions;
		\item [(ii)] Prove that any solution of the Nash system \eqref{nash} converge towards a solution of the Master Equation \eqref{ME}.
	\end{itemize}

	We point out here that these steps are studied in two different contexts: the first case is the so-called \emph{First order Master Equation}, when the control of the generic player has the form \eqref{dyn} and the Master Equation is \eqref{ME}, and the \emph{Second order Master Equation}, or \emph{Master Equation with common noise}. In this case, the dynamic \eqref{dyn} has also an additional Brownian term $dW_t$, not depending on $i$ (which justifies the adjective \emph{common}). This leads to a different and more difficult type of Master Equation, with some additional terms depending also on the second derivative $D_{mm}U$. It is relevant to say that Mean Field Games with common noise were already studied by Carmona, Delarue and Lacker in \cite{loackerSI}.
	
	The well-posedness of the Master Equation was studied in many papers. After the first definition given by Lions in \cite{prontoprontoprontoSI}, a first result of existence and uniqueness of solutions was proved, in the first order case, by Chassagneux, Crisan and Delarue in \cite{28SI}.
	
	The most important result in this context was certainly achieved by Cardaliaguet, Delarue, Lasry and Lions in \cite{cardSI}, who proved, in a periodic setting $\Omega=\mathbb{T}^d$, the well-posedness of the Master Equation in both first and second order cases.
	
	Other important results about the well-posedness of the Master Equation were given in \cite{gomezSI,bucchinSI,cardconvSI,fishSI,fifa21SI,tonaliSI,nuova16SI,koulibalySI}. Anyway, all these results are proved in the case $\Omega=\mathbb{T}^d$ or $\Omega=\R^d$, so they cannot be applied in our framework. A first important result about existence and uniqueness of solutions for the Master Equation with Neumann boundary condition was proved in \cite{memedesimoSI}, and the results contained in it will be the starting point of our work.
	
	As regards the convergence problem, the already mentioned work \cite{cardSI} for the well- posedness of the Master Equation contains also a convergence result, and the ideas used in it will be used also in this article. Of course, the presence of a boundary condition here requires more effort in order to obtain the same results.
	
	The symmetrical structure of the problem, in particular the expression of $v^N_i$ as in \eqref{vN},
	suggests to us to consider suitable finite dimensional projections of $U$, along the empirical
	distributions $\meh$. Hence, we define
	\begin{equation*}\
	u^N_i(t,\bo x):=U(t,x_i,\meh)
	\end{equation*}
	and the convergence problem holds in the sense that $|u^N_i-v^N_i|\to 0$ in some suitable norms. We will be more specific throughout the article.
	
	Many other results about the convergence problem are given in the literature. The convergence in the whole space, under weaker condition than in \cite{cardSI}, was given by Carmona and Delarue in \cite{resultunemmiezzSI}. In \cite{nuova14SI}, Cardaliaguet, Cirant and Porretta studied the convergence for the major-minor problem. Very important are the works of Delarue, Lacker and Ramanan, who used the Master Equation for the analysis of the large deviation problem and the central limit theorem, see \cite{nuova4SI,ramadanSI}. As regards finite state problems, some recents developments were studied by Bayraktar and Cohen in \cite{nuova1SI} and by Cecchin and Pelino in \cite{nuova11SI}.
	
	There are also convergence result obtained without using Master Equation. See, for example, the work by Lacker in \cite{loackerSI} and \cite{loacker2SI}. Other important papers about the convergence problem are \cite{sonocarmelaSI, cicciocaputoSI, checchinoSI, jedi, chicazzosieteSI, durrSI}.\\
	
	The paper is organized as follows.
	
	\begin{itemize}
		\item In Section $2$ we list the main notation and the hypotheses we need in the rest of the article;
		\item Section $3$ is devoted to the Master Equation. According to \cite{memedesimoSI}, we consider the solution of \eqref{ME} and we prove some further regularity estimates we will need in order to prove the convergence result;
		\item In Section $4$ we study the properties of the functions $u^N_i$ defined previously. In particular, we will give a representation formula for the derivatives of $u^N_i$ , depending on the derivatives of $U$; then we use these formulas in order to prove that $u^N_i$ solves ``almost" the Nash system, with an error of order $\frac1N\,$;
		\item In Section $5$ we define the following related process for $v^N_i$:
		$$
		\begin{cases}
		dY_t^i=-H_p(Y_t^i,D_{x_i}v^N_i(t,\bo Y_t))\,dt+\sqrt 2\sigma(Y_t^i)dB_t^i-dk_t^{i,Y}\,,\\
		Y_{t_0}^i=Z_i\,,
		\end{cases}
		$$
	where $\bo Z={(Z_i)}_i$ are i.i.d. random variables of fixed law $m_0$, and we prove that
	\begin{equation*}
	\begin{split}
	&|u^N_i(t_0,\bo Z)-v^N_i(t_0,\bo Z)|\le\frac CN\qquad\P-a.s.\,,\\
	\E&\left[\int_{t_0}^T|D_{x_i}v^N_i(t,\bo Y_t)-D_{x_i}u^N_i(t,\bo Y_t)|^2\,dt\right]\le\frac C{N^2}\,;
	\end{split}
	\end{equation*}
	With these asymptotic estimates, we are able to prove the two main convergence results: we will prove that
	$$
	\lim\limits_{N\to+\infty}\sup\limits_i|v^N_i(t_0,\bo x)-U(t_0,x_i,m_{\bo x}^N)|=0\,,
	$$
	with $m_{\bo x}^N:=\frac 1N\sum\limits_i\delta_{x_i}\,$; moreover, if we set
	$$
	w^N_i(t_0,x_i,m_0):=\int_{\Omega^{N-1}}v^N_i(t_0,\bo x)\prod\limits_{j\neq i}m_0(dx_j)\,,
	$$
	when, in $L^1(m_0)$ norm,
	$$
	\lim\limits_{N\to+\infty}w^N_i(t_0,\cdot,m_0)=U(t_0,\cdot,m_0)\,;
	$$
	\item Eventually, in Section $6$ we prove a last result concerning the \emph{convergence of the trajectories:} if we consider the process
	$$
	\begin{cases}
	dX_t^i=-H_p(X_t^i,D_{x_i}u^N_i(t,\bo X_t))\,dt+\sqrt 2\sigma(X_t^i)dB_t^i-dk_t^{i,X}\,,\\
	X_{t_0}^i=Z_i\,,
	\end{cases}
	$$
	then
	$$
	\E\left[\supt|X_t^i-Y_t^i|^2\right]\le\frac C{N^2}\,.
	$$
	\end{itemize}
	
	\section{Notation and Assumptions}
	
	Let $\Omega\subset\R^d$ be the closure of an open bounded set, with boundary $\mathcal{C}^{2+\alpha}$. Called $T$ the final time of the process, we define $Q_T:=[0,T]\times\Omega$.
	
	As already said, $\mathcal{P}(\Omega)$ denotes the set of Borel probability measures in $\Omega$. We define the \emph{push-forward} measure in this way: for $\Upsilon\subset\R^d$, $r:\Omega\to\Upsilon$ a Borel map and $\mu\in\mathcal{P}(\Omega)$, the push-forward measure $r\sharp\mu\in\mathcal{P}(\Upsilon)$ is defined by $r\sharp\mu (A)=\mu(r^{-1}(A))$, for $A\subseteq\Upsilon\,.$
	
	We briefly recall the definitions of the spaces of functions involved in this article, already stated in \cite{memedesimoSI}.
	
	For $\alpha\in(0,1)$ and $n\ge0$, the space $\mathcal{C}^{n+\alpha}(\Omega)$ of functions $n$-times differentiable with $\alpha$-H\"{o}lder continuous derivatives is endowed with the following norm:
	\begin{align*}
	\norm{\phi}_{n+\alpha}:=\sum\limits_{|\ell|\le n}\norminf{D^l\phi}+\sum\limits_{|\ell|= n}\sup\limits_{x\neq y}\frac{|D^\ell\phi(x)-D^\ell\phi(y)|}{|x-y|^\alpha}\,.
	\end{align*}
	The subspace of $\mathcal{C}^{n+\alpha}(\Omega)$ consisting of functions $\phi$ such that $\bdone{\phi}$ is denoted by $\mathcal{C}^{n+\alpha,N}(\Omega)$.
	
	In the same way we can define the space of functions $\mathcal{C}^{\frac{n+\alpha}{2},n+\alpha}([0,T]\times\Omega)$, consisting of continuous functions $\phi$ with $\left(\frac\alpha 2,\alpha\right)$-H\"{o}lder derivatives $D_t^rD_x^s\phi$, with $2r+s\le n$. When there is no possibility of confusion, we will write simply $\mathcal{C}^{n+\alpha}$, $\mathcal{C}^{\frac{n+\alpha}{2},n+\alpha}$.
	
	In a similar way we define $\mathcal{C}^{0,\alpha}$, $\mathcal{C}^{\alpha,0}$, $\mathcal{C}^{1,2+\alpha}$. For a precise definition of the spaces and the norms endowed, we refer to \cite{lsuSI,memedesimoSI}.
	
	The dual spaces of $\mathcal{C}^{n+\alpha}$ and $\mathcal{C}^{n+\alpha,N}$ will be denoted by $\mathcal{C}^{-(n+\alpha)}$ and $\mathcal{C}^{-(n+\alpha),N}$, endowed with the classical duality norm:
	$$
	\norm{\rho}_{-(n+\alpha)}=\sup\limits_{\norm{\phi}_{n+\alpha}\le 1}\langle\rho,\phi\rangle\,,\qquad \norm{\rho}_{-(n+\alpha),N}=\sup\limits_{\substack{\norm{\phi}_{n+\alpha}\le 1\\aD\phi\cdot\nu_{|\partial\Omega}=0}}\langle \rho,\phi \rangle\,.
	$$
	
	In order to give a notion of continuity and differentiability with respect to the measure $m$, we need to give two important definitions: the \emph{Wasserstein distance} and the \emph{intrinsic derivative}.
	\begin{defn}
	We define the Wasserstein distance $\dw$ in $\mathcal{P}(\Omega)$ in the following way: for $m_1\,,m_2\in\mathcal{P}(\Omega)$
	\begin{equation}\label{def:wass}
	\dw(m_1,m_2):=\sup\limits_{Lip(\phi)\le 1}\into \phi(x)d(m_1-m_2)(x)\,.
	\end{equation}
	\end{defn}
	
	Now we define two suitable derivations of $U$ with respect to the measure $m$. The second one appears in the formulation of the Master Equation, giving sense to \eqref{ME}.
	
	\begin{defn}
	Let $U:\mathcal{P}(\Omega)\to\R$. We say that $U$ is of class $\mathcal{C}^1$ if there exists a continuous map $K:\mathcal{P}(\Omega)\times\Omega\to\R$ such that, for all $m_1$, $m_2\in\mathcal{P}(\Omega)$ we have
	\begin{equation}\label{deu}
	\lim\limits_{t\to0}\frac{U(m_1+t(m_2-m_1))-U(m_1)}t=\into K(m_1,x)\,d(m_2-m_1)(x)\,.
	\end{equation}
	We call $\dm{U}(m,x):=K(m,x)$. Then, if $U$ is of class $\mathcal{C}^1$ and $\dm{U}$ is $\mathcal{C}^1$ with respect to the space variable $x$, we can define the intrinsic derivative $D_mU:\mathcal{P}(\Omega)\times\Omega\to\R^d$ as
	$$
	D_mU(m,x):=D_x\dm{U}(m,x)\,.
	$$ 
	\end{defn}
	
	We observe that $\dm{U}$ is defined in \eqref{deu} up to additive constants. Therefore we adopt the following normalization convention
	$$
	\into\dm{U}(m,x)dm(x)=0\qquad\forall\, m\in\mathcal{P}(\Omega)\,.
	$$
	From \eqref{deu} we deduce a sort of first-order Taylor expansion in $m$: if $m_1,\,m_2\in\mathcal{P}(\Omega)$ then
	$$
	U(m_2)-U(m_1)=\int_0^1\into\dm{U}(m_1+s(m_2-m_1),x)\,d(m_2-m_1)(x)\,.
	$$
	We conclude this section by stating the main assumptions we will need in the paper.
	
	\begin{hp}\label{ipotesi}
	Suppose that, for some $\alpha\in(0,1)$ and $C>0$,
	\begin{itemize}
		\item [a.] $\norm{a(\cdot)}_{1+\alpha}<\infty$ and, for some $\mu>\lambda>0$ it holds $\lambda I_{d\times d}\le a(x)\le \mu I_{d\times d}\,\quad\forall\,x\in\R^d\,.$
		\item [b.]$H:\Omega\times\R^d\to\R$ is a smooth function, Lipschitz in the last variable and s.t.
		$$
		0< H_{pp}(x,p)\le C I_{d\times d}\qquad\mbox{for a certain }C>0\,;
		$$
		\item [c.] $F$ and $G:\Omega\times\mathcal{P}(\Omega)\to\R$ are smooth functions, $\mathcal{C}^1$ in the measure variable and satisfying
		$$
		\into \left(F(x,m')-F(x,m)\right) d(m'-m)(x)\ge0\,,\qquad (\mbox{resp. with }G)\,,
		$$
		Moreover, $F,\,G,\,\dm{F}$ and $\dm{G}$ satisfy the following estimates:
		$$
		\sup\limits_{m\in\mathcal{P}(\Omega)}\left(\norm{F(\cdot,m)}_{\alpha}+\norm{G(\cdot,m)}_{2+\alpha}+\norm{\frac{\delta F}{\delta m}(\cdot,m,\cdot)}_{\alpha,2+\alpha}\hspace{-0.3cm}+\norm{\frac{\delta G}{\delta m}(\cdot,m,\cdot)}_{2+\alpha,2+\alpha}\right)\le C\,.
		$$
		\begin{align*}
		&\mathrm{Lip}\left(\dm{F}\right):=\sup\limits_{m_1\neq m_2}\left(\dw(m_1,m_2)^{-1}\norm{\dm{F}(\cdot,m_1,\cdot)-\dm{F}(\cdot,m_2,\cdot)}_{\alpha,1+\alpha}\right)<+\infty\,,\\
		&\mathrm{Lip}\left(\dm{G}\right):=\sup\limits_{m_1\neq m_2}\left(\dw(m_1,m_2)^{-1}\norm{\dm{G}(\cdot,m_1,\cdot)-\dm{G}(\cdot,m_2,\cdot)}_{2+\alpha,2+\alpha}\right)<+\infty\,;
		\end{align*}
		\item [d.] The following Neumann boundary conditions are satisfied:
		\begin{align*}
			&\left\langle a(y)D_y\dm{F}(x,m,y), \nu(y)\right\rangle_{|\partial\Omega}=0\,,\qquad \left\langle a(y)D_y\dm{G}(x,m,y),\nu(y)\right\rangle_{|\partial\Omega}=0\,,\\
			&\langle a(x)D_xG(x,m), \nu(x)\rangle_{|\partial\Omega}=0\,,
		\end{align*}
		for all $m\in\mathcal{P}(\Omega)$.
		\end{itemize}
	\end{hp}

	We stress the fact that the first boundary condition appearing in hypothesis $d.$ is not a classical compatibility condition, but it will be crucial in order to prove to prove the Neumann boundary condition of $D_mU$ in \eqref{ME}, see \emph{Corollary 5.13} of \cite{memedesimoSI} for further details.
	
	With these hypotheses we will be able to prove the main convergence result of the paper, which is the following one:
	\begin{thm}\label{thm:Teorema}
	Suppose hypotheses \ref{ipotesi} hold true. Then, if we define
	\begin{equation}\label{eq:defmN}
	m_{\bo x}^N:=\frac 1N\sum\limits_{i=1}^N\delta_{x_i}\,,
	\end{equation}
	we have
	\begin{equation}\label{eq:risult1}
	\sup\limits_{\substack{t\in[0,T]\\i=1,\dots,N}}\big|v^N_i(t,\bo{x})-U(t,x_i,m_{\bo x}^N)\big|\le\frac CN\,.
	\end{equation}
	Moreover, if we set
	\begin{equation}\label{eq:defwN}
	w^N_i(t,x_i,m):=\int_{\Omega^{N-1}}v^N_i(t,\bo{x})\prod\limits_{j\neq i}m_0(dx_j)\,,
	\end{equation}
	then
	\begin{equation}\label{eq:risult2}
	\norm{w^N_i(t,\cdot,m)-U(t,\cdot,m)}_{L^1(m)}\le C\omega_N\,,
	\end{equation}
	where
	\begin{equation}\label{eq:defomega}
	\omega_N=\left\{
	\begin{array}{ll}
	CN^{-\frac 1d} & \mbox{if }d\ge 3\,,\\
	CN^{-\frac 12}\log N & \mbox{if }d=2\,,\\	
	CN^{-\frac 12} & \mbox{if }d=1\,.\\
	\end{array}
	\right.
	\end{equation}
	\end{thm}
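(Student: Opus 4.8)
The plan is to follow the strategy of \cite{cardSI}, adapted to the Neumann setting, exploiting the regularity of $U$ and its intrinsic derivatives established in Section 3 and the ``almost-Nash'' property of the projections $u^N_i(t,\bo x):=U(t,x_i,\meh)$ proved in Section 4. The starting point is the comparison on the characteristics: introduce the reflected system $\bo Y_t$ driven by the feedback $-H_p(Y^i_t,D_{x_i}v^N_i(t,\bo Y_t))$ with i.i.d.\ initial data $\bo Z$ of law $m_0$, and the analogous system $\bo X_t$ driven by $-H_p(\cdot,D_{x_i}u^N_i)$. Writing $w^N_i:=u^N_i-v^N_i$ and applying It\^o's formula along $\bo Y_t$, the error terms from Section 4 (of order $\frac1N$) together with the monotonicity of $F$ and $G$ and the convexity $H_{pp}>0$ should yield, by a Gronwall-type argument, both the pointwise bound $|u^N_i(t_0,\bo Z)-v^N_i(t_0,\bo Z)|\le C/N$ $\P$-a.s.\ and the $L^2$-bound on $\int_{t_0}^T|D_{x_i}v^N_i-D_{x_i}u^N_i|^2\,dt$ of order $N^{-2}$. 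The reflection terms $dk^{i,Y}_t$ contribute boundary integrals of the form $\int a(Y^i_s)D_{x_i}w^N_i\cdot\nu(Y^i_s)\,d|k|^i_s$, which vanish thanks to the Neumann conditions satisfied by both $v^N_i$ (from \eqref{nash}) and $u^N_i$ (inherited from the two boundary conditions of \eqref{ME} via the projection, using hypothesis $d.$).

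Next I would upgrade the almost-sure/in-expectation estimates to the genuine sup-bound \eqref{eq:risult1}. Fix a deterministic configuration $\bo x$ and choose the initial law $m_0=m^N_{\bo x}=\frac1N\sum_j\delta_{x_j}$ (or, more precisely, run the argument with $\bo Z$ distributed so that $x_i$ is pinned and the remaining coordinates are i.i.d.\ from an approximating measure, as in \cite{cardSI}). Since $u^N_i(t_0,\bo x)=U(t_0,x_i,\meh)$ and $\meh$ is $\dw$-close to $m^N_{\bo x}$ with error $O(1/N)$, the Lipschitz regularity of $U$ in $m$ (Section 3) converts the bound on $|u^N_i-v^N_i|$ into \eqref{eq:risult1}. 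The only subtlety is that the pointwise-in-$\omega$ estimate must hold for the specific realization placing a point mass at the prescribed $x_i$; this is handled exactly as in the periodic case by noting the estimates from Section 5 are uniform in the (deterministic) pinned coordinate.

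For the second conclusion \eqref{eq:risult2}, the strategy is different and relies on a law-of-large-numbers argument. Start from $v^N_i(t_0,\bo x)=v^N(t_0,x_i,\meh)$ and $w^N_i(t_0,x_i,m)=\int_{\Omega^{N-1}}v^N_i(t_0,\bo x)\prod_{j\neq i}m(dx_j)$; write $U(t_0,x_i,m)-w^N_i(t_0,x_i,m)=\int_{\Omega^{N-1}}\big(U(t_0,x_i,m)-v^N_i(t_0,\bo x)\big)\prod_{j\neq i}m(dx_j)$. Split $U(t_0,x_i,m)-v^N_i$ as $\big(U(t_0,x_i,m)-U(t_0,x_i,\meh)\big)+\big(U(t_0,x_i,\meh)-v^N_i\big)$. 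The second bracket is $u^N_i-v^N_i$, controlled by $C/N$ from the first part. The first bracket is controlled, via the $\dw$-Lipschitz bound on $U$, by $\dw(m,\meh)$; integrating against $\prod_{j\neq i}m(dx_j)$ and using the standard quantitative LLN / empirical-measure concentration estimate (Fournier--Guillin type) gives exactly the rate $\omega_N$ in \eqref{eq:defomega}, including the logarithmic correction in dimension $2$. The $L^1(m)$ norm in $x_i$ is then recovered by integrating the pointwise bound, which is uniform in $x_i$.

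The main obstacle, and where the Neumann setting genuinely departs from \cite{cardSI}, is the It\^o expansion along the reflected diffusion: one must verify that the local-time terms $dk^{i,Y}_t$ produce only boundary contributions that cancel, which requires the compatibility/Neumann conditions to propagate correctly to both $v^N_i$ and the projected $u^N_i$ (the latter depending on the delicate second boundary condition $a(y)D_mU\cdot\nu(y)=0$), and that the regularity of $U$ up to the boundary — established in Section 3 under hypothesis $d.$ — is strong enough to justify applying It\^o's formula and the subsequent Gronwall estimate uniformly. A secondary technical point is ensuring all constants are independent of $N$ and of the pinned coordinate, which forces the Section 3 estimates on $D_mU$, $D_yD_mU$ and $D_{mm}U$ to be used in their uniform (in $m$) form.
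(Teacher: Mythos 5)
Your proposal follows essentially the same route as the paper: first the It\^o--Gronwall estimates along the reflected system $\bo Y_t$ (the paper's Theorem \ref{thm:penultimo}, whose proof uses Lemma \ref{lem:prob} to handle the reflection term and the mere $W^{2,\infty}$ regularity of $u^N_i$), then the $\dw$-Lipschitz continuity of $U$ in the measure to pass from $\meh$ to $m^N_{\bo x}$, and finally the Fournier--Guillin/Ajtai--Koml\'os--Tusn\'ady empirical-measure rates to get \eqref{eq:risult2} with the rate $\omega_N$. The only cosmetic difference is how the almost-sure bound \eqref{eq:stimainizio} is upgraded to all $\bo x\in\Omega^N$: the paper takes $m_0$ with full support (namely $m_0\equiv 1$) and concludes by continuity of $U$ and $v^N_i$, whereas you pin/atomize the initial law, which works just as well since the constants in the Section 5 estimates do not depend on $m_0$ (and note that no estimate on $D_{mm}U$ is needed anywhere, only the Lipschitz bound \eqref{eq:LipdmU} on $\frac{\delta U}{\delta m}$).
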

	Although the proof of this Theorem is the same as \emph{Theorem 2.13} of \cite{cardSI}, all the regularity results about the solution of the Master Equation and its trajectories $u^N_i$ needs to be readapted in the case of Neumann boundary conditions.
	
	\section{The Master Equation and further estimates}
	
	In this section we recollect some basic results about the Mean Field Games system with Neumann conditions and the well-posedness of the Master Equation. Then we will use these results in order to improve the regularity of $U$ and to prove some technical estimates we will need in the rest of the paper.\\
	
	The first result guarantees existence, uniqueness and regularity of solutions for the Mean Field Games system \eqref{mfg}.
	
	\begin{thm}
	Suppose hypotheses \ref{ipotesi} hold. Then the system \eqref{mfg} has a unique classical solution $(u.m)\in\mathcal{C}^{1+\frac\alpha 2,2+\alpha}\times\mathcal{C}([0,T];\mathcal{P}(\Omega))$, and this solution satisfies, for a certain $C>0$,
	\begin{equation}\label{stimemfg}
	\sup\limits_{t_1\neq t_2}\frac{\dw(m(t_1),m(t_2))}{|t_1-t_2|^\miezz}+\norm{u}\amd\le C\,.
	\end{equation}
	Furthermore, if $(u_1,m_1)$ and $(u_2,m_2)$ are two solutions of \eqref{mfg}, with $m_1(t_0)=m_{01}$, $m_2(t_0)=m_{02}$, then for a certain $C>0$, $p>1$,
	\begin{equation}\label{eq:stimeLip}
	\norm{u_1-u_2}\amv+\supt\dw(m_1(t),m_2(t))+\norm{m_1-m_2}_{L^p(Q_T)}\le C\dw(m_{01},m_{02})\,,	
	\end{equation}
where $C$ does not depend on $t_0,m_{01},m_{02}\,.$
	\begin{proof}
	See \emph{Proposition 3.3.}, \emph{Proposition 4.1.} and \emph{Corollary 5.6.} of \cite{memedesimoSI}.
	\end{proof}
	\end{thm}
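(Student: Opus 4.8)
The plan is threefold: obtain existence by a Schauder fixed-point argument on the measure component, derive the bounds \eqref{stimemfg} by a parabolic bootstrap, and establish uniqueness together with the Lipschitz estimate \eqref{eq:stimeLip} via the Lasry--Lions monotonicity method; the boundary terms must be tracked carefully at every step.

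For existence I would fix $m\in\mathcal{C}([0,T];\mathcal{P}(\Omega))$ and first solve the backward HJB equation in \eqref{mfg} with terminal datum $G(\cdot,m(T))$ and the co-normal condition $a(x)Du\cdot\nu_{|\partial\Omega}=0$. Since $H$ is Lipschitz in $p$, $a$ is uniformly elliptic and $\mathcal{C}^{1+\alpha}$, and $F(\cdot,m(t))$, $G(\cdot,m(T))$ are uniformly bounded in $\mathcal{C}^{\alpha}$, $\mathcal{C}^{2+\alpha}$ with the compatibility $aD_xG\cdot\nu_{|\partial\Omega}=0$ from hypothesis d., parabolic Schauder theory for Neumann problems (\cite{lsuSI}) yields a unique $u\in\mathcal{C}^{1+\frac\alpha2,2+\alpha}$ with an $m$-independent bound, in particular $\norminf{Du}\le C_0$. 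Then I would solve the forward Fokker--Planck equation with $m(t_0)=m_0$ and the no-flux condition: its weak solution stays a probability measure precisely because the boundary flux vanishes by the chosen condition, and satisfies $\dw(m(t_1),m(t_2))\le C|t_1-t_2|^{\miezz}$ because the drift $H_p(\cdot,Du)$ is bounded by $C_0$ and $a$ is nondegenerate. This defines a map $\Phi:m\mapsto\tilde m$ on the set of curves with this $\miezz$-H\"older modulus, which is convex and compact in $\mathcal{C}([0,T];\mathcal{P}(\Omega))$ by Ascoli; continuity of $\Phi$ comes from stability of the two linear equations, so Schauder's theorem gives a fixed point, i.e. a solution of \eqref{mfg}.

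The bound \eqref{stimemfg} then follows for any solution by bootstrap: $\norminf{Du}\le C_0$ makes the Fokker--Planck drift bounded, hence the $\miezz$-H\"older-in-$\dw$ estimate for $m$ holds; by hypothesis c. this makes $t\mapsto F(\cdot,m(t))$ bounded in $\mathcal{C}^{\frac\alpha2,\alpha}$, and the Neumann Schauder estimate for the HJB equation upgrades $u$ to the claimed uniform $\mathcal{C}^{1+\frac\alpha2,2+\alpha}$ bound. Uniqueness is the classical computation: for two solutions with the same initial datum I would differentiate $t\mapsto\into(u_1-u_2)\,d(m_1-m_2)$, insert the four equations, integrate by parts in space, and use the Neumann condition of $u_i$ together with the no-flux condition of $m_i$ to annihilate every boundary integral; what remains is a nonnegative combination of the convexity defect of $H$ (weighted by $m_1+m_2$) and the monotone contributions of $F$ and $G$, forcing $Du_1=Du_2$ $m_i$-a.e., hence $m_1=m_2$ and $u_1=u_2$.

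The estimate \eqref{eq:stimeLip} is the main obstacle. The pair $(u_1-u_2,m_1-m_2)$ solves a linearised system whose sources $F(\cdot,m_1)-F(\cdot,m_2)$, $G(\cdot,m_1)-G(\cdot,m_2)$ are controlled by $\dw(m_{01},m_{02})$ via hypothesis c. I would (i) repeat the monotonicity computation above, now picking up an extra term $\into(u_1-u_2)(t_0)\,d(m_{01}-m_{02})$; bounding it by $\norminf{u_1-u_2}\,\dw(m_{01},m_{02})$ and absorbing yields an $m_i$-weighted $L^2$ control of $D(u_1-u_2)$ of order $\dw(m_{01},m_{02})$; (ii) combine this with a Gr\"onwall/duality argument for the Fokker--Planck difference — using uniform ellipticity and bounded drifts — to get $\supt\dw(m_1(t),m_2(t))+\norm{m_1-m_2}_{L^p(Q_T)}\le C\dw(m_{01},m_{02})$; (iii) with $\norm{m_1-m_2}_{L^p(Q_T)}$ in hand the HJB source for $u_1-u_2$ is controlled in $L^p$, and Neumann maximal/Schauder regularity gives $\norm{u_1-u_2}\amv\le C\dw(m_{01},m_{02})$. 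The delicate point, and the reason the constant is $t_0$-independent, is that each of these estimates must be time-translation invariant and, crucially, the boundary integrals arising in the integrations by parts must vanish identically by hypothesis d. rather than merely be bounded — otherwise the monotonicity structure is lost. Controlling these boundary contributions, and in step (iii) the compatibility of the source with the Neumann condition, is exactly where the present setting departs from the periodic case of \cite{cardSI}.
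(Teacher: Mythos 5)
The paper itself gives no argument here: the theorem is quoted from the companion work \cite{memedesimoSI} (Propositions 3.3, 4.1 and Corollary 5.6), so your proposal is in effect a reconstruction of that proof. Your overall route --- Schauder fixed point in the measure variable, Neumann--Schauder estimates for the HJB equation (using the compatibility $a D_xG\cdot\nu_{|\partial\Omega}=0$ from hypothesis d.), the $\miezz$-H\"older bound on $t\mapsto m(t)$ coming from the bounded drift, and Lasry--Lions monotonicity for uniqueness --- is the standard strategy followed there, and those parts of the sketch are sound. One point worth making explicit in the uniqueness/monotonicity step: hypothesis b. only gives $H_{pp}>0$, not uniform convexity; the lower bound you implicitly use holds on the compact set $\{(x,p):x\in\Omega,\ |p|\le\norminf{Du_i}\}$ by continuity, which is enough because $\norminf{Du_i}$ is already bounded by the Schauder step.

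The genuine gap is in step (i) of your proof of \eqref{eq:stimeLip}. First, $\into(u_1-u_2)(t_0)\,d(m_{01}-m_{02})$ cannot be bounded by $\norminf{u_1-u_2}\,\dw(m_{01},m_{02})$: by Kantorovich--Rubinstein duality the bound involves the Lipschitz seminorm of $(u_1-u_2)(t_0,\cdot)$, not its sup norm (a sup-norm bound would require total variation). This is repairable since $\norminf{Du_i}\le C$, but then the monotonicity identity alone only yields $\iint m_1|D(u_1-u_2)|^2\le C\,\mathrm{Lip}\big((u_1-u_2)(t_0,\cdot)\big)\,\dw(m_{01},m_{02})$, i.e.\ a weighted $L^2$ gradient bound of order $\dw(m_{01},m_{02})^{1/2}$ --- not of order $\dw(m_{01},m_{02})$ as you assert before ever invoking (ii) and (iii). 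The first-power estimate only comes out by closing a loop across all three steps: writing $B$ for the weighted $L^2$ gradient norm and $D_0=\dw(m_{01},m_{02})$, the Fokker--Planck stability of (ii) gives $\sup_t\dw(m_1(t),m_2(t))\le C(D_0+B)$, the Neumann--Schauder estimate of (iii) gives $\mathrm{Lip}(u_1-u_2)\le C\sup_t\dw(m_1(t),m_2(t))$, and feeding this back into the duality inequality yields $B^2\le C(D_0+B)D_0$, whence $B\le CD_0$ by Young's inequality, and only then $\sup_t\dw(m_1(t),m_2(t))$, $\norm{u_1-u_2}\amv$ and $\norm{m_1-m_2}_{L^p}$ are all of order $D_0$. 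So your ingredients are the right ones, but they must be assembled as a simultaneous absorption rather than sequentially; as ordered, step (i) claims a bound it cannot yet deliver. The $t_0$-independence of the constant is then automatic, since each estimate in the loop is uniform over $[0,T]$.
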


	We define $U$ as in \eqref{defU}. We recall that the crucial step in order to prove the well-posedness of the Master Equation is the $\mathcal{C}^1$ character of $U$ with respect to $m$. Actually, the regularity w.r.t. the measure is strictly related to the study of the following MFG linearized system:
	
	\begin{equation}\label{linear}
		\begin{cases}
			-z_t-\mathrm{tr}(a(x)D^2z)+H_p(x,Du)Dz=\mathlarger{\dm{F}}(x,m(t))(\rho(t))+h(t,x)\,,\\
			\rho_t-\sum\limits_{i,j}\partial^2_{ij}(a_{ij}(x)\rho)-\mathrm{div}(\rho(H_p(x,Du)+\tilde{b}))-\mathrm{div}(m H_{pp}(x,Du) Dz+c)=0\,,\\
			z(T,x)=\mathlarger{\dm{G}}(x,m(T))(\rho(T))+z_T(x)\,,\qquad\rho(t_0)=\rho_0\,,\\
			a(x)Dz\cdot\nu_{|\partial\Omega}=0\,,\quad\Big(\big(\sum\limits_j\partial_j(a_{ij}(x)\rho)\big)_i+\rho H_p(x,Du)+mH_{pp}(x,Du) Dz+c\Big)\cdot\nu_{|\partial\Omega}=0\,,
		\end{cases}
	\end{equation}
	
	\begin{thm}
	Suppose hypotheses \ref{ipotesi} hold, and let $z_T\in\mathcal{C}^{2+\alpha,N}$, $\rho_0\in\mathcal{C}^{-(1+\alpha)}$, $h\in\mathcal{C}^{0,\alpha}([t_0,T]\times\Omega)$, $c\in L^1([t_0,T]\times\Omega)$. Then there exists a unique solution $(z,\rho)\in\mathcal{C}^{1,2+\alpha}\times\,\left(\mathcal{C}([0,T];\mathcal{C}^{-(1+\alpha),N}(\Omega))\cap L^1(Q_T)\right)$ of system \eqref{linear}, which satisfies, for a certain $p>1$,
	\begin{equation}\label{eq:stimelin}
			\norm{z}\amv+\supt\norm{\rho(t)}_{-(1+\alpha),N}+\norm{\rho}_{L^p}\le C \left(\norm{z_T}_{2+\alpha}+\norm{\rho_0}_{-(1+\alpha)}+\norm{h}_{0,\alpha}+\norm{c}_{L^1}\right)\,.
	\end{equation}
	Moreover, $U$ is $\mathcal{C}^1$ w.r.t. $m$ and, called $(v,\mu)$ the solution of \eqref{linear} with $h=c=z_T=0$ and $\rho_0=\mu_0\in\mathcal{C}^{-(1+\alpha)}$, we have
	\begin{equation}\label{eq:reprformula}
	v(t_0,x)=\left\langle \dm{U}(t_0,x,m_0,\cdot),\mu_0\right\rangle\,,
	\end{equation}
	where $\langle\cdot,\cdot\rangle$ denotes the duality between $\mathcal{C}^{-(1+\alpha),N}$ and $\mathcal{C}^{1+\alpha,N}$. Finally, $\dm{U}$ satisfies
	\begin{equation}\label{eq:regdmU}
	\sup\limits_{\substack{t\in[0,T]\\m\in\mathcal{P}(\Omega)}}\norm{\dm{U}(t,\cdot,m,\cdot)}_{2+\alpha,2+\alpha}\le C\,.
	\end{equation}
	\begin{proof}
	See \emph{Proposition 5.8.}, \emph{Proposition 5.9.},  \emph{Theorem 5.10.}, \emph{Corollary 5.12.} of \cite{memedesimoSI}.
	\end{proof}
	\end{thm}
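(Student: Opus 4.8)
The statement bundles four results: well-posedness of the linearized forward--backward system \eqref{linear} together with the estimate \eqref{eq:stimelin}, the $\mathcal{C}^1$ regularity of $U$ in the measure variable, the representation formula \eqref{eq:reprformula}, and the uniform bound \eqref{eq:regdmU}. I would prove them in this order, since each feeds the next. The heart of the first part is an a priori estimate for \eqref{linear} obtained by a duality argument: pair the backward equation for $z$ against $\rho$ and the forward equation for $\rho$ against $z$, and integrate over $Q_T$. The transport and second--order terms are mutually in duality, so the bulk cross--terms cancel; the two Neumann boundary conditions in \eqref{linear} are exactly what makes the spatial boundary integrals vanish (this is where hypothesis $d.$ enters). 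What survives is a time--boundary contribution plus the quadratic forms $\into\dm{F}(x,m(t))(\rho(t))\,\rho(t)$ and the analogous terminal term with $\dm{G}$, which have the right sign by the monotonicity assumed in hypothesis $c.$ This closes the estimate, giving control of $\rho$ in $L^p(Q_T)$ and in $\mathcal{C}([0,T];\mathcal{C}^{-(1+\alpha),N})$, and then of $z$ in $\mathcal{C}^{1,2+\alpha}$ by Schauder estimates for the linear Neumann HJB equation, since the source $\dm{F}(x,m(t))(\rho(t))+h$ is $\mathcal{C}^{0,\alpha}$ in $x$ (the kernel $\dm{F}(\cdot,m,\cdot)\in\mathcal{C}^{\alpha,2+\alpha}$ pairs continuously with $\rho\in\mathcal{C}^{-(1+\alpha)}$, and satisfies the Neumann condition in $y$ needed for the duality).

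With the a priori bound in hand, existence follows by decoupling and fixed point. Given $\rho$, the backward equation has a unique solution $z=z[\rho]$ by parabolic Schauder theory for the Neumann problem; substituting into the forward equation and solving the resulting Fokker--Planck equation with its Neumann flux condition defines a map $\rho\mapsto\widetilde\rho$, whose fixed point solves \eqref{linear}. One can either run a contraction on a short time interval and iterate, or use the method of continuity, keeping the solution set closed precisely via the uniform estimate \eqref{eq:stimelin}. Uniqueness is then immediate by applying the same estimate to the difference of two solutions with zero data.

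Next I would identify $v$ with the derivative of $U$ along measure perturbations. Fix $m_0$ and $\mu_0$, set $m_0^s:=m_0+s\mu_0$, and let $(u^s,m^s)$ solve \eqref{mfg} with initial datum $m_0^s$. The Lipschitz stability \eqref{eq:stimeLip} bounds the difference quotients $(u^s-u)/s$ and $(m^s-m)/s$ uniformly in $s$, so along a subsequence they converge; differentiating \eqref{mfg} in $s$ shows the limit solves \eqref{linear} with $h=c=z_T=0$ and $\rho_0=\mu_0$, and uniqueness forces convergence of the whole family. This yields the $\mathcal{C}^1$ character of $U$, and since the map $\mu_0\mapsto v(t_0,x)$ is linear and bounded on $\mathcal{C}^{-(1+\alpha)}$ (again by \eqref{eq:stimelin}), a Riesz--type representation produces the kernel $\dm{U}(t_0,x,m_0,\cdot)\in\mathcal{C}^{1+\alpha,N}$ of \eqref{eq:reprformula}; the normalization convention is fixed by subtracting the mean, and consistency with the definition \eqref{deu} is a direct verification.

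The main obstacle is the final estimate \eqref{eq:regdmU}, i.e. uniform $\mathcal{C}^{2+\alpha}$ regularity of $\dm{U}(t,x,m,y)$ in both the space variable $x$ and the measure--point variable $y$. Regularity in $x$ follows from Schauder applied to the $z$--equation, but regularity in $y$ requires differentiating the linearized system with respect to the location of the perturbation (formally taking $\mu_0=\delta_y$) and bootstrapping: one differentiates in $y$, re--applies the a priori estimate \eqref{eq:stimelin} to the $y$--differentiated system, and uses the higher regularity of the data $\dm{F},\dm{G}$ from hypothesis $c.$ together with their Neumann conditions from hypothesis $d.$ to propagate the $2+\alpha$ bound up to the boundary, uniformly in $t$ and $m$. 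Controlling the duality kernel near $\partial\Omega$ and ensuring that the boundary condition $a(y)D_mU\cdot\nu(y)=0$ of \eqref{ME} is inherited is the delicate technical point.
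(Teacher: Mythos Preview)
The paper does not prove this theorem in-text: its entire proof is a citation to Propositions~5.8--5.9, Theorem~5.10, and Corollary~5.12 of the companion work \cite{memedesimoSI}. Your sketch is a plausible reconstruction of the architecture one expects there---duality estimate for the linearized system exploiting Lasry--Lions monotonicity and the two Neumann conditions, Schauder theory for the backward equation, a fixed-point argument for existence, difference quotients for the $\mathcal{C}^1$ regularity of $U$ in $m$, and a bootstrap for the joint $(2+\alpha,2+\alpha)$ regularity of the kernel---and nothing in the present paper contradicts it.

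One point deserves tightening. When you set $m_0^s:=m_0+s\mu_0$ with a general $\mu_0\in\mathcal{C}^{-(1+\alpha)}$, this path need not lie in $\mathcal{P}(\Omega)$, so the MFG system \eqref{mfg} is not even defined along it and the difference-quotient argument cannot be run as stated. The argument works directly only for $\mu_0=m_1-m_0$ with $m_1\in\mathcal{P}(\Omega)$, which is precisely what the definition \eqref{deu} requires; the extension of the representation \eqref{eq:reprformula} to arbitrary $\mu_0\in\mathcal{C}^{-(1+\alpha)}$ then follows by linearity of \eqref{linear} in the data, density of differences of probability measures, and the uniform bound \eqref{eq:stimelin}. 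Your Riesz step implicitly carries this extension, but the difference-quotient paragraph as written conflates the two stages. Apart from this, the outline is sound and agrees with the strategy of the cited reference.
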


	Throughout the paper, we will use the notation $(z,\rho)$ to indicate a general solution of \eqref{linear}, and the notation $(v,\mu)$ to indicate the solution of \eqref{linear} with $z_T=c=h=0$ and $\rho_0=\mu_0\,$. 

	The $\mathcal{C}^1$ character of the function $U$ in the measure variable allows us to prove existence and uniqueness of solutions for the Master Equation \eqref{ME} (\emph{Theorem 2.5.} of \cite{memedesimoSI}). The next result allows us to improve the regularity of the function $U$. This improvement will be essential to prove some regularity estimates for the projections ${(u^N_i)}_i\,$.
	
	\begin{thm}
	Suppose hypotheses \ref{ipotesi} are satisfied. Then the derivative of the Master Equation $\dm{U}$ is Lipschitz continuous with respect to the measure variable:
	\begin{equation}\label{eq:LipdmU}
	\supo\sup\limits_{m_1\neq m_2}\big(\dw(m_1,m_2)\big)^{-1}\norm{\dm{U}(t,\cdot,m_1,\cdot)-\dm{U}(t,\cdot,m_2,\cdot)}_{2+\alpha,1+\alpha}\le C\,.
	\end{equation}

	\begin{proof}
	We consider, for $i=1,2$, the solution $(v_i,\mu_i)$ of the linearized system \eqref{linear} related to $(u_i,m_i)\,.$
	
	To avoid too heavy notations, we take $t_0=0$ and we define
	\begin{align*}
	\begin{array}{ll}
		H'_i(t,x):=H_p(x,Du_i(t,x))\,, & H''_i(t,x)=H_{pp}(x,Du_i(t,x))\,,\\
		F'(x,m,\mu)=\displaystyle\into \dm{F}(x,m,y)\,\mu(dy)\,, & G'(x,m,\mu)=\displaystyle\into \dm{G}(x,m,y)\,\mu(dy)\,. 
	\end{array}
	\end{align*}

	Then the couple $(z,\rho):=(v_1-v_2,\mu_1-\mu_2)$ satisfies the following linear system:
	$$
	\begin{cases}
	-z_t-\mathrm{tr}(a(x)D^2z)+H'_1\cdot Dz=F'(x,m_1(t),\rho(t))+h\,,\\
	\rho_t-\sum\limits_{i,j}\partial^2_{ij}(a_{ij}(x)\rho)-\mathrm{div}(\rho H'_1)-\mathrm{div}(m_1H''_1Dz+c)=0\,,\\
	z(T,x)=G'(x,m_1(T),\rho(T))+z_T\,,\qquad \rho(t_0)=0\,,\\
	a(x)Dz\cdot\nu_{|\partial\Omega}=0\,,\qquad \Big(\big(\sum\limits_j\partial_j(a_{ij}(x)\rho)\big)_i+\rho H'_1+mH''_1Dz+c\Big)\cdot \nu_{|\partial\Omega}=0\,,
	\end{cases}
	$$
	where
	\begin{align*}
	&h(t,x)=h_1(t,x)+h_2(t,x)\,,\\
	&h_1(t,x)=F'(x,m_1(t),\mu_2(t))-F'(x,m_2(t),\mu_2(t))\,,\\
	&h_2(t,x)=\big(H'_1(t,x)-H'_2(t,x)\big)\cdot Dv_2(t,x)\,,\\
	&c(t,x)=\mu_2(t)\big(H'_1-H'_2)(t,x)+\big[(m_1H''_1-m_2H''_2)\big](t,x)\,,\\
	&z_T(x)=G'(x,m_1(T),\mu_2(T))-G'(x,m_2(T),\mu_2(T))\,.
	\end{align*}

	Applying \eqref{eq:stimelin} we obtain this estimate on $z$:
	$$
	\norm{z}\amv\le C\left(\norm{z_T}_{2+\alpha}+\norm{h}_{0,\alpha}+\norm{c}_{L^1}\right)\,.
	$$
	
	Now we estimate the terms in the right-hand side.
	
	The term with $z_T$, thanks to \eqref{eq:stimelin} and the hypothesis \emph{c.} of \ref{ipotesi}, is immediately estimated:
	$$
	\norm{z_T}_{2+\alpha}\le\norm{\dm{G}(\cdot,m_1(T),\cdot)-\dm{G}(\cdot,m_2(T),\cdot)}_{2+\alpha,1+\alpha}\norm{\mu_2(T)}_{-(1+\alpha),N}\le C\dw(m_{01},m_{02})\norm{\mu_0}_{-(1+\alpha)}\,.
	$$
	As regards the space estimate for $h$, we have
	$$
	\norm{h(t,\cdot)}_\alpha\le\norm{F'(\cdot,m_1(t),\mu_2(t))-F'(\cdot,m_2(t),\mu_2(t))}_\alpha+\norm{(H'_1-H'_2)(t,\cdot)Dv_2(t,\cdot)}_\alpha\,.
	$$
	The first term is bounded as $z_T:$
	$$
	\norm{F'(\cdot,m_1(t),\mu_2(t))-F'(\cdot,m_2(t),\mu_2(t))}_\alpha\le C\dw(m_{01},m_{02})\norm{\mu_0}_{-(1+\alpha)}\,.
	$$
	The second term, using \eqref{eq:stimeLip} and \eqref{eq:stimelin}, can be estimated in this way:
	$$
	\norm{(H'_1-H'_2)(t,\cdot)Dv_2(t,\cdot)}_\alpha\le C\norm{(u_1-u_2)(t)}_{1+\alpha}\norm{v_2(t)}_{1+\alpha}\le C\dw(m_{01},m_{02})\norm{\mu_0}_{-(1+\alpha)}\,.
	$$
	In summary,
	$$
	\norm{h}_{0,\alpha}=\supo\norm{h(t,\cdot)}_\alpha\le C\dw(m_{01},m_{02})\norm{\mu_0}_{-(1+\alpha)}\,.
	$$
	Finally, we estimate $\norm{c}_{L^1}$. We have
	\begin{align*}
	\norm{c}_{L^1} &=\intif(H'_1-H'_2)(t,x)\,\mu_2(t,dx)\,dt+\intif H''_1(t,x)Dv_2(t,x)(m_1(t)-m_2(t))(dx)\,dt\\
	&+\intif(H'_1-H'_2)(t,x)\,Dv_2(t,x)\,m_2(t,dx)\,dt\le C\norm{u_1-u_2}\amu\norm{\mu_2}_{L^1}\\
	&+C\norm{u_1}\amu\norm{v_2}\amu\norm{m_1-m_2}_{L^1}+C\norm{u_1-u_2}\amu\norm{v_2}\amu\,.
	\end{align*}
	The first term in the right-hand side, thanks to \eqref{eq:stimeLip} and \eqref{eq:stimelin}, is bounded by
	$$
	C\norm{u_1-u_2}\amu\norm{\mu_2}_{L^1}\le C\dw(m_{01},m_{02})\norm{\mu_0}_{-(1+\alpha)}\,.
	$$
	The second and the third term are estimated in the same way, using \eqref{stimemfg} and again \eqref{eq:stimeLip} and \eqref{eq:stimelin}. Then
	$$
	\norm{c}_{L^1}\le C\dw(m_{01},m_{02})\norm{\mu_0}_{-(1+\alpha)}\,.
	$$
	Putting together all these estimates, we finally obtain:
	$$
	\norm{z}\amv\le C\dw(m_{01},m_{02})\norm{\mu_0}_{-(1+\alpha)}\,.
	$$
	Since
	$$
	z(t_0,x)=\into\left(\dm{U}(t_0,x,m_1,y)-\dm{U(t_0,x,m_2,y)}\right)\,\mu_0(dy)\,,
	$$
	we have proved \eqref{eq:LipdmU}\,.
	\end{proof}
	\end{thm}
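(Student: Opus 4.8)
The plan is to differentiate the MFG system in the measure direction and then use the representation formula \eqref{eq:reprformula} to read off the modulus of continuity of $\dm{U}$ from a stability estimate for the linearized system. Fix two initial measures $m_{01},m_{02}\in\mathcal{P}(\Omega)$ and a perturbation $\mu_0\in\mathcal{C}^{-(1+\alpha)}$; let $(u_i,m_i)$ be the solution of \eqref{mfg} with $m_i(t_0)=m_{0i}$, and let $(v_i,\mu_i)$ be the solution of the linearized system \eqref{linear} built on $(u_i,m_i)$ with $h=c=z_T=0$ and $\rho_0=\mu_0$. By \eqref{eq:reprformula}, $v_i(t_0,x)=\langle\dm{U}(t_0,x,m_{0i},\cdot),\mu_0\rangle$, so it is enough to prove $\norm{v_1-v_2}\amv\le C\dw(m_{01},m_{02})\norm{\mu_0}_{-(1+\alpha)}$ and then take the supremum over $\norm{\mu_0}_{-(1+\alpha)}\le1$: the $x$-regularity of order $2+\alpha$ comes precisely from the $\amv$-estimate, while the $y$-regularity of order $1+\alpha$ is encoded in the choice $\mu_0\in\mathcal{C}^{-(1+\alpha)}$ and the duality in the $y$-variable.

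First I would write the linear system solved by $(z,\rho):=(v_1-v_2,\mu_1-\mu_2)$. Subtracting the two copies of \eqref{linear} and retaining the coefficients of the first system on the left, every mismatch between the coefficients of system $1$ and system $2$ is pushed into inhomogeneous data: a source $h=h_1+h_2$ in the $z$-equation, with $h_1$ coming from $\big(\dm{F}(\cdot,m_1,\cdot)-\dm{F}(\cdot,m_2,\cdot)\big)$ paired with $\mu_2$ and $h_2=\big(H_p(\cdot,Du_1)-H_p(\cdot,Du_2)\big)\cdot Dv_2$; a vector source $c$ in the continuity equation collecting $\mu_2\big(H_p(\cdot,Du_1)-H_p(\cdot,Du_2)\big)$ and $m_1H_{pp}(\cdot,Du_1)-m_2H_{pp}(\cdot,Du_2)$; and a terminal datum $z_T$ of the same type as $h_1$ but with $G$ in place of $F$. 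The key structural point is that $\rho(t_0)=0$, so the only driving data are $h$, $c$, $z_T$.

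Then I would invoke the a priori estimate \eqref{eq:stimelin} for this system, which yields $\norm{z}\amv\le C\big(\norm{z_T}_{2+\alpha}+\norm{h}_{0,\alpha}+\norm{c}_{L^1}\big)$, and bound each term. The contributions $z_T$ and $h_1$ are controlled directly by the Lipschitz-in-measure bounds on $\dm{G}$ and $\dm{F}$ from hypothesis \emph{c.} of \ref{ipotesi}, combined with $\supt\norm{\mu_2(t)}_{-(1+\alpha),N}\le C\norm{\mu_0}_{-(1+\alpha)}$ from \eqref{eq:stimelin}. The term $h_2$ uses the smoothness of $H$ (so that $H_p$ is Lipschitz in $p$ with bounded derivatives), the MFG stability estimate $\norm{u_1-u_2}\amu\le C\dw(m_{01},m_{02})$ of \eqref{eq:stimeLip}, and $\norm{v_2}\amu\le C\norm{\mu_0}_{-(1+\alpha)}$ from \eqref{eq:stimelin}. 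For $\norm{c}_{L^1}$ I would split into three pieces — $\big(H_p(\cdot,Du_1)-H_p(\cdot,Du_2)\big)$ tested against $\mu_2$, $H_{pp}(\cdot,Du_1)Dv_2$ tested against $m_1-m_2$, and $\big(H_{pp}(\cdot,Du_1)-H_{pp}(\cdot,Du_2)\big)Dv_2$ tested against $m_2$; the first and third are again handled by \eqref{eq:stimeLip}, \eqref{eq:stimelin} and the uniform bound \eqref{stimemfg} on $u_1$, while the middle one requires the $L^p(Q_T)$-bound $\norm{m_1-m_2}_{L^p}\le C\dw(m_{01},m_{02})$ from \eqref{eq:stimeLip}. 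Adding up, $\norm{z}\amv\le C\dw(m_{01},m_{02})\norm{\mu_0}_{-(1+\alpha)}$; since $z(t_0,x)=\into\big(\dm{U}(t_0,x,m_1,y)-\dm{U}(t_0,x,m_2,y)\big)\,\mu_0(dy)$, taking the supremum over $\norm{\mu_0}_{-(1+\alpha)}\le1$ gives \eqref{eq:LipdmU}.

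The step I expect to be most delicate is the bookkeeping in the $c$-term: one has to split $m_1H_{pp}(\cdot,Du_1)-m_2H_{pp}(\cdot,Du_2)$ so that each summand pairs the difference of a bounded smooth factor with an object whose norm is already controlled — $\mu_2$, $m_1-m_2$, or $m_2$ — and the appearance of $m_1-m_2$ there is exactly what forces the use of the $L^p$ (not merely $L^1$) estimate in \eqref{eq:stimeLip}. Everything else is a routine propagation of the already established MFG and linearized-system stability bounds; no new compactness or regularity input is needed.
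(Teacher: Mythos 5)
Your proposal is correct and follows essentially the same route as the paper: differencing the two linearized systems around $(u_1,m_1)$ and $(u_2,m_2)$ with $\rho(t_0)=0$, pushing the coefficient mismatches into the data $h_1,h_2,c,z_T$, estimating them via the Lipschitz bounds on $\dm{F},\dm{G}$, the MFG stability estimate \eqref{eq:stimeLip} and the linearized estimate \eqref{eq:stimelin}, and concluding through the representation formula \eqref{eq:reprformula}. Even your three-way splitting of the $c$-term matches the paper's decomposition (the paper's displayed third term has a typo, $H'_1-H'_2$ in place of $H''_1-H''_2$, which your version implicitly corrects), so no further comment is needed.
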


	This theorem is a fundamental step in order to prove this technical lemma.
	
	\begin{lem}\label{lem:lemmanodim}
	Suppose hypotheses of the previous theorem are satisfied. Then, if $m\in\mathcal{P}(\Omega)$ and $\phi\in L^2(m,\R^d)$ is a bounded vector field such that \emph{Im}$(id+\phi)\subseteq\Omega$, we have:
	\begin{equation}\label{eq:primolemma}
	\norm{U(t,\cdot,(id+\phi)\sharp m)-U(t,\cdot,m)-\into D_mU(t,\cdot, m,y)\cdot\phi(y)\,dm(y)}_{1+\alpha}\le C\norm{\phi}^2_{L^2(m)}\,.
	\end{equation}
	\end{lem}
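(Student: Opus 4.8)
The statement is a second-order Taylor expansion of $U$ in the measure variable along the specific curve of measures $m_s := (id + s\phi)\sharp m$, $s \in [0,1]$. The natural strategy is to differentiate twice along this curve and control the second derivative uniformly. Write $\theta(s) := U(t, \cdot, m_s)$ as a map from $[0,1]$ into $\mathcal{C}^{1+\alpha}$. The zeroth-order Taylor identity for the linear derivative (stated in Section 2) gives
\[
U(t,\cdot, m_1) - U(t,\cdot,m) = \int_0^1 \into \dm{U}(t,\cdot,m_s,y)\, d(m_1 - m)(y)\, ds,
\]
and since $m_s = (id+s\phi)\sharp m$ while $m = id\sharp m$, the inner integral rewrites, via the change-of-variables / push-forward definition, as $\into \big[\dm{U}(t,\cdot,m_s, x+s\phi(x)) - \dm{U}(t,\cdot,m_s,x)\big]\, dm(x)$. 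This already exhibits one factor of $\phi$ after using a first-order Taylor expansion in the space variable $y$; the plan is to make this quantitative.

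First I would expand $\dm{U}(t,\cdot,m_s, x + s\phi(x))$ in the space variable around $x$, using that $D_m U = D_x \tfrac{\delta U}{\delta m}$ and that by \eqref{eq:regdmU} the derivative $\dm{U}(t,\cdot,m,\cdot)$ is bounded in $\mathcal{C}^{2+\alpha,2+\alpha}$ uniformly in $(t,m)$, so in particular $D_m U(t,\cdot,m,\cdot)$ and its space-gradient in $y$ are bounded in $\mathcal{C}^{1+\alpha}$ uniformly. This yields
\[
\dm{U}(t,\cdot,m_s,x+s\phi(x)) - \dm{U}(t,\cdot,m_s,x) = s\, D_mU(t,\cdot,m_s,x)\cdot\phi(x) + R_s(x),
\]
with $\norm{R_s(\cdot)}_{1+\alpha}$ controlled — after integrating against $dm$ — by $C\,\norm{\phi}_{L^2(m)}^2$ (the quadratic remainder of the Taylor expansion; one has to be a little careful to keep the bound in $L^2(m)$ rather than $L^\infty$, using $|\phi|^2$ pointwise and boundedness of the second space-derivative of $\tfrac{\delta U}{\delta m}$). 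Integrating in $s$ over $[0,1]$ turns the main term into $\tfrac12 \into D_mU(t,\cdot,m,x)\cdot\phi(x)\,dm(x)$ plus a correction coming from replacing $m_s$ by $m_0 = m$ inside $D_mU$. So the remaining task is to estimate $\into \big[D_mU(t,\cdot,m_s,x) - D_mU(t,\cdot,m,x)\big]\cdot\phi(x)\,dm(x)$, integrated against $s\,ds$.

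This is exactly where the previous theorem is used: by \eqref{eq:LipdmU}, $D_mU$ — being the space-gradient of $\tfrac{\delta U}{\delta m}$, whose modulus of continuity in $m$ is controlled in $\mathcal{C}^{2+\alpha,1+\alpha}$ — is Lipschitz in $m$ for the Wasserstein distance $\dw$, uniformly in $t$ and in the base point. Since $\dw(m_s, m) \le s\,\into|\phi(x)|\,dm(x) \le s\,\norm{\phi}_{L^2(m)}$ (the push-forward $m_s$ is the law of $x + s\phi(x)$ under $m$, and $id + s\phi$ moves mass by $s|\phi|$), we get $\norm{D_mU(t,\cdot,m_s,\cdot) - D_mU(t,\cdot,m,\cdot)}_{1+\alpha} \le C s \norm{\phi}_{L^2(m)}$; pairing with $\phi$ in $L^2(m)$ and integrating $s\,ds$ over $[0,1]$ gives a contribution bounded by $C\norm{\phi}_{L^2(m)}^2$. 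Collecting the main term $\tfrac12\cdot(\text{factor})$ — one should double-check the constant $\tfrac12$ cancels correctly against the factor $s$ from the push-forward displacement, which it does since $\int_0^1 s\,ds \cdot (\text{displacement }s\phi)$ bookkeeping actually produces the clean $\into D_mU\cdot\phi\,dm$ after the $s$-integration is organized as above — together with the two remainders yields \eqref{eq:primolemma}.

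The main obstacle is bookkeeping the $\mathcal{C}^{1+\alpha}$-norm (in the $x$-variable, the state variable of $U$) of each remainder term while only assuming $\phi \in L^2(m)$ rather than $\phi$ smooth: one must verify that the Taylor remainder $R_s$ and the Wasserstein-Lipschitz error, after integration in $y$ against $dm(y)$, are genuinely $\mathcal{C}^{1+\alpha}$ in $x$ with the stated quadratic bound. This works because all $x$-regularity is carried by the kernels $\dm{U}(t,\cdot,m,\cdot)$ and $D_mU(t,\cdot,m,\cdot)$, which by \eqref{eq:regdmU} and \eqref{eq:LipdmU} are uniformly bounded (resp. Lipschitz in $m$) in $\mathcal{C}^{2+\alpha,\cdot}$, so differentiating under the integral sign in $x$ and estimating the Hölder seminorm costs nothing extra; the only place $\phi$'s low regularity enters is through the scalar factors $|\phi(x)|$, $|\phi(x)|^2$ integrated against $dm$, which is precisely what produces $\norm{\phi}_{L^2(m)}^2$.
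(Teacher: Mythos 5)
A remark on context first: the paper does not actually write out a proof of this lemma --- it only states that the proof is a straightforward readaptation of Propositions 7.3 and 7.4 of \cite{cardSI} --- so the comparison is with that standard argument. Your overall strategy is exactly that one: the first-order Taylor identity for $\dm{U}$, rewriting the measure difference through the push-forward change of variables, a Taylor expansion of the kernel in its last variable using the uniform bound \eqref{eq:regdmU}, and the Lipschitz-in-$m$ estimate \eqref{eq:LipdmU} of the previous theorem combined with $\dw(m_s,m)\le s\norm{\phi}_{L^1(m)}\le s\norm{\phi}_{L^2(m)}$; your observation that the $\mathcal{C}^{1+\alpha}$-regularity in $x$ is carried entirely by the kernels, so that only the scalar factors $|\phi|,|\phi|^2$ see the low regularity of $\phi$, is also the right one.

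However, there is a concrete error in the central bookkeeping, which you notice but do not resolve. The Taylor identity stated in Section 2 is along the linear interpolation $m_s=(1-s)m+s\,(id+\phi)\sharp m$, not along the push-forward curve $m_s=(id+s\phi)\sharp m$ that you fix at the outset; and, for either choice of $m_s$, the change of variables applied to the fixed difference $d\big((id+\phi)\sharp m-m\big)$ produces the displacement $\phi(x)$, not $s\phi(x)$: the inner integral equals $\into\big[\dm{U}(t,\cdot,m_s,x+\phi(x))-\dm{U}(t,\cdot,m_s,x)\big]\,dm(x)$. With the correct displacement there is no factor $s$ in the leading term and hence no $\tfrac12$; with your displacement $s\phi(x)$, the identity you start from is not the one stated in the paper (along the push-forward curve the correct first-order formula is the chain rule $\frac{d}{ds}U(t,\cdot,m_s)=\into D_mU(t,\cdot,m_s,x+s\phi(x))\cdot\phi(x)\,dm(x)$, which would itself need a derivation). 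As written, your main term is $\tfrac12\into D_mU\cdot\phi\,dm$, and the sentence claiming the $\tfrac12$ ``cancels correctly'' is not a computation --- it papers over the inconsistency between the two curves. The fix is easy: work with the linear interpolation throughout; then $\dm{U}(t,\cdot,m_s,x+\phi(x))-\dm{U}(t,\cdot,m_s,x)=\int_0^1 D_mU(t,\cdot,m_s,x+\tau\phi(x))\cdot\phi(x)\,d\tau$, and the two replacements $m_s\to m$ (via \eqref{eq:LipdmU} and $\dw(m_s,m)\le s\norm{\phi}_{L^2(m)}$) and $x+\tau\phi(x)\to x$ (via \eqref{eq:regdmU}) give \eqref{eq:primolemma} with the clean main term and quadratic remainders. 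Note also that the linear interpolation has the further advantage that $m_s\in\mathcal{P}(\Omega)$ automatically, whereas $(id+s\phi)\sharp m$ need not be supported in $\Omega$ when $\Omega$ is not convex, since only $\mathrm{Im}(id+\phi)\subseteq\Omega$ is assumed.
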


	The proof is a trivial readaptation of \emph{Proposition 7.3.} and \emph{Proposition 7.4.} of \cite{cardSI}, so we skip it.
	
	\section{The projections $u^N_i$ and their properties}
	
	As already said in the introduction, in order to prove the convergence of $v^N_i$ towards $U$, the main idea si to work with suitable \emph{finite dimensional projections} of $U$, proving that they are nearly solutions to the Nash system.
	
	So, for $N\ge 2$ and $1\le i\le N$, we define the following functions $u^N_i$:
	\begin{equation}\label{uN}
	u^N_i(t,x):=U(t,x_i,\meh)\,,
	\end{equation}
	where $\meh$ is the empirical distribution of the players $j\neq i$, defined in \eqref{empirical}\,.
	
	Thanks to the regularity of $U$, we already know that
	\begin{equation}\label{eq:reg_uN_1}
	u^N_i\in\mathcal{C}^{1+\frac\alpha 2,2+\alpha}\qquad\hbox{with respect to the couple $(t,x_i)$}\,.
	\end{equation}
	Using Lemma \ref{lem:lemmanodim} we are able to prove a regularity result for $u^N_i$ with respect to the other variables $(x_j)_{j\neq i}\,$.
	
	\begin{prop}\label{prop:reg_uN_2}
	For all $j\neq i$, the following formulas for the derivatives of $u^N_i$ hold true:
	\begin{align}
	&\hspace{0.6cm}D_{x_j}u^N_i(t,\bo{x}) =\frac 1{N-1}D_mU(t,x_i,\meh,x_j)\,,\label{eq:reg_uN_2}\\
	&\hspace{0.2cm}D^2_{x_i, x_j}u^N_i(t,\bo{x}) =\frac 1{N-1}D_xD_mU(t,x_i,\meh,x_j)\,,\label{eq:reg_uN_3}\\
	&\left|D^2_{x_j,x_j}u^N_i(t,\bo{x})-\frac 1{N-1}D_yD_mU(t,x_i,\meh,x_j)\right| \le\frac C{N^2}\,. \label{eq:reg_uN_4}
	\end{align}
	\begin{proof}
	Thanks to the regularity of $D_mU$, the second equality is an obvious consequence of the first one. So, we restrict ourselves to the proof of the first and the third formula.
	
	We consider $\bo{x}=(x_1,\dots,x_N)\in\Omega^N$ with $x_j\neq x_k$ when $j\neq k$ and $\eps<\min\limits_{j\neq k}|x_j-x_k|\,$.
	
	We fix a vector $\bo{v}=(v_1,\dots,v_N)$ with $v_i=0$ and $\bo{x}+\bo{v}\in\Omega^N$, and we consider, for $\eps$ small enough, a smooth and bounded vector field such that
	$$
	\phi(x)=v_j\qquad\forall\,x\in B_{\frac\eps 3}(x_j)\,,\qquad\qquad x+\phi(x)\in\Omega\qquad\forall\, x\in\Omega\,.
	$$
	We note that
	$$
	u^N_i(t,\bo{x}+\bo{v})=U(t,x_i,(id+\phi)\sharp\meh)\,,\qquad u^N_i(t,\bo{x})=U(t,x_i,\meh)\,.
	$$
	
	Then, \eqref{eq:primolemma} implies that
	$$
	u^N_i(t,\bo{x}+\bo{v})=u^N_i(t,\bo{x})+\into D_m U(t,x_i,\meh,y)\cdot\phi(y)\,d\meh(y)+o\left(\norm{v}_{L^2\left(\meh\right)}\right)\,.
	$$
	So, computing the integral and the norm in the right-hand side, we find
	$$
	u^N_i(t,\bo{x}+\bo{v})=u^N_i(t,\bo{x})+\frac 1{N-1}\sum\limits_{j\neq i}D_m U(t,x_i,\meh,x_j)\cdot v_j + o(|v|)\,.
	$$
	This Taylor expansion proves the first formula for all points $\bo{x}$ with $x_j\neq x_k$ when $j\neq k$. Since this subset is dense in $\Omega^N$, we have proved \eqref{eq:reg_uN_2} and \eqref{eq:reg_uN_3}.
	
	As regards \eqref{eq:reg_uN_4}, we start showing that $D_{x_j}u^N_i$ is a Lipschitz function in the space variable. Actually
	\begin{equation*}
	\begin{split}
	\left|D_{x_j} u^N_i(t,\bo{x})-D_{x_j}u^N_i(t,\bo{y})\right|\le &\,\frac CN \left|D_m U(t,x_i,\meh,x_j)-D_mU(t,x_i,\meh,y_j)\right|\\
	+ &\,\frac CN\left|D_m U(t,x_i,\meh,y_j)-D_mU(t,x_i,m_{\bo{y}}^{N_i},y_j)\right|\,.
	\end{split}
	\end{equation*}
	The first term in the right-hand side is immediately controlled by $\frac CN|\bo{x}-\bo{y}|$, using the regularity of $\dm{U}$ \eqref{eq:regdmU}. As regards the second term, we use \eqref{eq:LipdmU} to obtain
	$$
	\frac CN\left|D_m U(t,x_i,\meh,y_j)-D_mU(t,x_i,m_{\bo{y}}^{N_i},y_j)\right|\le \frac CN\dw\big(\meh,m_{\bo{y}}^{N_i}\big)\le \frac CN|\bo{x}-\bo{y}|\,.
	$$
	This means that $D^2_{x_j,x_j}$ exists almost everywhere, and
	$$
	\norminf{D^2_{x_jx_j}}\le\frac CN\,.
	$$
	To prove \eqref{eq:reg_uN_4}, we estimate the quantity
	$$
	\left|\frac{D_{x_j}u^{N,i}(t,\bo{x}+h\bo{e_{jk}})-D_{x_j}u^N_i(t,\bo{x})}h-\frac 1{N-1}\partial_{y_k}D_m U(t,x_i,\meh,x_j)\right|\,,
	$$
	where $\bo{e_{jk}}=\left(e^1_{jk},\dots,e^N_{jk}\right)$, with $e^l_{jk}=0$ if $l\neq k$ and $e^j_{jk}=e_k\in\R^d\,.$
	We observe that we can do that as long as $x_{j}\notin\partial\Omega$, in order to have $\bo{x}+h\bo{e_{jk}}\in\Omega^N$ for $h$ small enough. Since this subset is dense in $\Omega^N$, we can restrict ourselves to this case.
	
	Using \eqref{eq:reg_uN_2}, we can bound the quantity above by
	\begin{align*}
	&\frac CN\left|\frac{D_mU(t,x_i,m_{\bo{x}+h\bo{e_{jk}}}^{N,i},x_j+he_k)-D_mU(t,x_i,\meh,x_j+he_k)}h\right|\\
	+ &\frac CN \left|\frac{D_mU(t,x_i,\meh,,x_j+he_k)-D_mU(t,x_i,\meh,x_j)}h-\partial_{y_k}D_m U(t,x_i,\meh,x_j)\right|\,.
	\end{align*}

	The first term is estimated from above, using \eqref{eq:LipdmU}, by
	$$
	\frac C{Nh}\,\dw\!\left(m_{\bo{x}+h\bo{e_{jk}}}^{N,i},\meh\right)\le\frac C{N^2}\,,
	$$
	while the second term, using Lagrange's Theorem and \eqref{eq:LipdmU}, is equal to
	$$
	\frac CN\left|\partial_{y_k}D_m U(t,x_i,\meh,x^h)-\partial_{y_k}D_m U(t,x_i,\meh,x_j)\right|\le\frac CN h^\alpha\,,
	$$
	for a certain $x^h$ in the line segment between $x_j$ and $x_j+he_k$. Then, for $h$ sufficiently small, we obtain
	$$
	\left|\frac{D_{x_j}u^{N,i}(t,\bo{x}+h\bo{e_{jk}})-D_{x_j}u^N_i(t,\bo{x})}h-\frac 1{N-1}\partial_{y_k}D_m U(t,x_i,\meh,x_j)\right|\le \frac C{N^2}\,,
	$$
	and, passing to the limit as $h\to 0$ for all $1\le k\le d\,$, we obtain \eqref{eq:reg_uN_4} and we conclude.
	\end{proof}
	\end{prop}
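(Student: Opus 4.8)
The plan is to extract all three formulas from the quadratic Taylor expansion of $U$ in the measure variable furnished by Lemma~\ref{lem:lemmanodim}, arguing first on the dense subset of $\Omega^N$ on which the points $x_1,\dots,x_N$ are pairwise distinct, and then extending by density, using the continuity of $D_mU$ and of its derivatives.

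First I would prove \eqref{eq:reg_uN_2}. Fix $\bo{x}\in\Omega^N$ with pairwise distinct entries, set $\eps<\min_{j\neq k}|x_j-x_k|$, and take a perturbation $\bo{v}=(v_1,\dots,v_N)$ with $v_i=0$ and $\bo{x}+\bo{v}\in\Omega^N$. For $\eps$ small there is a smooth bounded vector field $\phi$ with $\phi\equiv v_j$ on $B_{\eps/3}(x_j)$ for every $j\neq i$ (the balls being disjoint by the choice of $\eps$) and with $x+\phi(x)\in\Omega$ for all $x\in\Omega$. Then the empirical measure $m^{N,i}_{\bo{x}+\bo{v}}$ equals $(id+\phi)\sharp\meh$, so that $u^N_i(t,\bo{x}+\bo{v})=U(t,x_i,(id+\phi)\sharp\meh)$, and Lemma~\ref{lem:lemmanodim} gives
$$u^N_i(t,\bo{x}+\bo{v})=u^N_i(t,\bo{x})+\into D_mU(t,x_i,\meh,y)\cdot\phi(y)\,d\meh(y)+O\!\left(\norm{\phi}^2_{L^2(\meh)}\right).$$
Since $\into D_mU(t,x_i,\meh,y)\cdot\phi(y)\,d\meh(y)=\frac{1}{N-1}\sum_{j\neq i}D_mU(t,x_i,\meh,x_j)\cdot v_j$ and $\norm{\phi}^2_{L^2(\meh)}=\frac{1}{N-1}\sum_{j\neq i}|v_j|^2\le|\bo{v}|^2=o(|\bo{v}|)$, this is a first-order Taylor expansion of $\bo{x}\mapsto u^N_i(t,\bo{x})$ at a dense set of points, and reading off its linear part yields \eqref{eq:reg_uN_2} there, hence everywhere by continuity of $D_mU$. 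Then \eqref{eq:reg_uN_3} follows just by differentiating \eqref{eq:reg_uN_2} in $x_i$, which is allowed by the regularity \eqref{eq:regdmU} of $D_mU$.

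The main obstacle is \eqref{eq:reg_uN_4}. The delicate point is that in $D_{x_j}u^N_i=\frac{1}{N-1}D_mU(t,x_i,\meh,x_j)$ the variable $x_j$ enters \emph{both} through the last (space) argument and through the empirical measure $\meh$, so a naive differentiation produces two contributions. First I would check that $\bo{x}\mapsto D_{x_j}u^N_i(t,\bo{x})$ is Lipschitz: the last-argument dependence is Lipschitz with constant $O(1/N)$ by \eqref{eq:regdmU}, whereas the dependence through the measure is controlled by $\frac{C}{N}\dw(\meh,m_{\bo{y}}^{N,i})\le\frac{C}{N}|\bo{x}-\bo{y}|$ using the Lipschitz-in-measure estimate \eqref{eq:LipdmU}. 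Consequently $D^2_{x_j,x_j}u^N_i$ exists almost everywhere and $\norminf{D^2_{x_j,x_j}u^N_i}\le C/N$. To identify it, I would restrict to $x_j\notin\partial\Omega$ (still a dense set) so that the increments $\bo{x}+h\bo{e_{jk}}$ stay in $\Omega^N$ for small $h$, and split the difference quotient of $D_{x_j}u^N_i$ in the direction $\bo{e_{jk}}$ into: (a) a term that only records the change $\meh\to m_{\bo{x}+h\bo{e_{jk}}}^{N,i}$ of the measure, bounded by $\frac{C}{Nh}\dw\!\left(m_{\bo{x}+h\bo{e_{jk}}}^{N,i},\meh\right)\le\frac{C}{Nh}\cdot\frac{h}{N}=\frac{C}{N^2}$ via \eqref{eq:LipdmU}; and (b) the difference quotient of $y\mapsto D_mU(t,x_i,\meh,y)$ at $y=x_j$, which differs from $\partial_{y_k}D_mU(t,x_i,\meh,x_j)$ by $O(h^\alpha)$ by the mean value theorem and the $\alpha$-H\"older continuity of $D_yD_mU$ contained in \eqref{eq:regdmU}. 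Letting $h\to0$ for each $1\le k\le d$ produces \eqref{eq:reg_uN_4}. Throughout, the bookkeeping to be careful about is that every identity is first obtained on a dense subset of $\Omega^N$ (pairwise distinct points, resp. $x_j$ interior) and only then extended to all of $\Omega^N$ by continuity of $U$, $D_mU$ and $D_yD_mU$.
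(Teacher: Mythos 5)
Your proposal is correct and follows essentially the same route as the paper's proof: the first-order expansion from Lemma \ref{lem:lemmanodim} with the locally constant vector field on a dense set of configurations, differentiation in $x_i$ for \eqref{eq:reg_uN_3}, and for \eqref{eq:reg_uN_4} the Lipschitz property of $D_{x_j}u^N_i$ followed by the split of the difference quotient into the measure-shift part (controlled by \eqref{eq:LipdmU}, giving $C/N^2$) and the spatial part (controlled by \eqref{eq:regdmU} via the mean value theorem, giving $O(h^\alpha)$). Your explicit bookkeeping of $\dw\!\left(m_{\bo{x}+h\bo{e_{jk}}}^{N,i},\meh\right)\le Ch/N$ and of $\norm{\phi}^2_{L^2(\meh)}$ only makes explicit what the paper leaves implicit.
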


	Now we are ready to state a first result, showing that $(u^N_i)_{1\le i\le N}$ is ``almost" a solution of the Nash system \eqref{nash}.
	
	\begin{thm}\label{thm:nash_uni}
	Let hypotheses \ref{ipotesi} be satisfied. Then $u^N_i\in\mathcal{C}^1([0,T]\times\Omega^N)$, $u^N_i(t,\cdot)\in W^{2,\infty}(\Omega^N)$ and $u^N_i$ solves almost everywhere the following equation:
	\begin{equation}\label{eq:nash_uni}
	\begin{cases}
		-\partial_t u^N_i(t,\bo x)-\mathlarger{\sum}\limits_j \mathrm{tr}(a(x_j)D^2_{x_j,x_j}u^N_i(t,\bo x)) +H(x_i,D_{x_i}u^N_i(t,\bo x))\\
		\hspace{2.2cm}+\,\,\mathlarger{\sum}\limits_{j\neq i}H_p(x_j,D_{x_j}u^N_j(t,\bo x))\cdot D_{x_j}u^N_i(t,\bo x)=F(\bo x,m_{\bo x}^{N,i})+r^N_i(t,\bo{x})\,,\\
		u^N_i(T,\bo x)=G(\bo x,m_{\bo x}^{N,i})\,,\\
		a(x_j)D_{x_j}u^{N,i}(t,\bo x)\cdot\nu(x_j)_{|x_j\in\partial\Omega}=0\,,
	\end{cases}
	\end{equation}
	where $r^N_i\in L^\infty$ with $\norminf{r^N_i}\le\frac CN\,.$
	\begin{proof}
	The regularity of $u^N_i$ follows from \eqref{eq:reg_uN_1} and Proposition \ref{prop:reg_uN_2}.
	
	The boundary condition of $u^N_i$ is an immediate consequence of the representation formula for the derivatives of $u^N_i$ and the boundary conditions of \eqref{ME}.
	
	Actually for $j=i$ we have
	$$
	a(x_i)D_{x_i}u^N_i\cdot\nu(x_i)_{|x_i\in\partial\Omega}=a(x_i)D_xU(t,x_i,\meh)\cdot\nu(x_i)_{|x_i\in\partial\Omega}=0
	$$
	thanks to the first boundary condition of \eqref{ME}. On the other hand, for $j\neq i$ we have
	$$
	a(x_j)D_{x_j}u^N_i\cdot\nu(x_j)_{|x_j\in\partial\Omega}=\frac 1{N-1}a(x_j)D_mU(t,x_i,\meh,x_j)\cdot\nu(x_j)_{|x_j\in\partial\Omega}=0
	$$
	thanks to the second boundary condition of \eqref{ME}.
	
	We sketch the rest of the proof, which is the same as \emph{Proposition 6.3} of \cite{cardSI}. Evaluating the Master Equation \eqref{ME} at $(t,x_i,\meh)$ and using Proposition \ref{prop:reg_uN_2}, we find
	
	\begin{align*}
	&-\partial_t u^N_i-\mathrm{tr}(a(x_i)D^2_{x_i,x_i}u^N_i)+H(x_i,D_{x_i}u^N_i) -\into\mathrm{tr}(a(y)D_yD_mU(t,x_i,\meh,y))d\meh(y)\\
	&+\frac 1{N-1}\mathlarger{\sum}\limits_{j\neq i}H_p(x_j,D_xU(t,x_j,\meh))\cdot D_mU(t,x_i,\meh,x_j)=F(t,x_i,\meh)\,.
	\end{align*}
	Using the derivative formulas of $u^N_i$ and the Lipschitz continuity of $D_xU$ with respect to $m$, we get
	\begin{align*}
	\frac 1{N-1}\sum\limits_{j\neq i}H_p(x_j,D_xU(t,x_j,\meh))&\cdot D_mU(t,x_i,\meh,x_j)\\
	&=\sum\limits_{j\neq i}H_p(x_j, D_{x_j}u^N_j)\cdot D_{x_j}u^N_i(t,\bo{x}) + O\left(\frac 1N\right)\,.
	\end{align*}
	For the integral term, we have
	$$
	\into\mathrm{tr}(a(y)D_yD_mU(t,x_i,\meh,y))d\meh(y)=\sum\limits_{j\neq i}D^2_{x_j,x_j}u^N_i(t,\bo{x})+O\left(\frac 1N\right)\,.
	$$
	Collecting all the estimations we obtain \eqref{eq:nash_uni}, which concludes the proof.
	\end{proof}
	\end{thm}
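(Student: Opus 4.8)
The plan is to read off the equation for $u^N_i$ by evaluating the Master Equation \eqref{ME} at the point $(t,x_i,\meh)$ and then converting every term involving $U$, $D_xU$, $D_mU$, $D_xD_mU$, $D_yD_mU$ into the corresponding derivative of $u^N_i$ via the representation formulas of Proposition \ref{prop:reg_uN_2}, keeping careful track of the $O(1/N)$ discrepancies. The regularity claim $u^N_i\in\mathcal{C}^1([0,T]\times\Omega^N)$ with $u^N_i(t,\cdot)\in W^{2,\infty}(\Omega^N)$ is immediate: in the $(t,x_i)$ variables this is \eqref{eq:reg_uN_1}, and in the $x_j$ variables ($j\neq i$) the first derivative is $\frac1{N-1}D_mU(t,x_i,\meh,x_j)$ by \eqref{eq:reg_uN_2}, which is $\mathcal{C}^1$ in $x_i$ and Lipschitz in the $x_j$'s (the Lipschitz bound in the $x_j$ variables, of order $1/N$, was established inside the proof of Proposition \ref{prop:reg_uN_2} using \eqref{eq:regdmU} and \eqref{eq:LipdmU}); mixed second derivatives are handled by \eqref{eq:reg_uN_3} and \eqref{eq:reg_uN_4}. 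The boundary conditions are the cleanest part: for $j=i$, $a(x_i)D_{x_i}u^N_i\cdot\nu(x_i)=a(x_i)D_xU(t,x_i,\meh)\cdot\nu(x_i)=0$ by the third line of \eqref{ME}; for $j\neq i$, $a(x_j)D_{x_j}u^N_i\cdot\nu(x_j)=\frac1{N-1}a(x_j)D_mU(t,x_i,\meh,x_j)\cdot\nu(x_j)=0$ by the fourth line of \eqref{ME}. The terminal condition $u^N_i(T,\bo x)=U(T,x_i,\meh)=G(x_i,\meh)$ is just \eqref{defU} at $t=T$ together with the terminal condition in \eqref{ME}.

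For the equation itself, I would substitute $m=\meh$ into \eqref{ME}. The local terms $-\partial_tU$, $-\mathrm{tr}(a(x_i)D^2_xU)$, $H(x_i,D_xU)$ become exactly $-\partial_tu^N_i$, $-\mathrm{tr}(a(x_i)D^2_{x_i,x_i}u^N_i)$, $H(x_i,D_{x_i}u^N_i)$. For the nonlocal drift term, since $\meh=\frac1{N-1}\sum_{j\neq i}\delta_{x_j}$,
$$
\into D_mU(t,x_i,\meh,y)\cdot H_p(y,D_xU(t,y,\meh))\,d\meh(y)=\frac1{N-1}\sum_{j\neq i}D_mU(t,x_i,\meh,x_j)\cdot H_p(x_j,D_xU(t,x_j,\meh))\,;
$$
by \eqref{eq:reg_uN_2} the factor $\frac1{N-1}D_mU(t,x_i,\meh,x_j)$ is exactly $D_{x_j}u^N_i$, so the only thing to replace is $D_xU(t,x_j,\meh)$ by $D_{x_j}u^N_j(t,\bo x)=D_xU(t,x_j,m_{\bo x}^{N,j})$. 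The two empirical measures $\meh$ and $m_{\bo x}^{N,j}$ differ by removing $\delta_{x_i}$ and adding $\delta_{x_j}$ divided by $N-1$, so $\dw(\meh,m_{\bo x}^{N,j})\le C/N$; using the Lipschitz continuity of $D_xU$ in $m$ (which follows from \eqref{eq:LipdmU}, i.e. $D_xU$ shares the regularity of $D_m U$) and the fact that $H_p$ is Lipschitz and $|D_{x_j}u^N_i|\le C/N$, the replacement costs only $\sum_{j\neq i}O(1/N)\cdot O(1/N)=O(1/N)$ in total. For the nonlocal diffusion term,
$$
\into \mathrm{tr}\big(a(y)D_yD_mU(t,x_i,\meh,y)\big)\,d\meh(y)=\frac1{N-1}\sum_{j\neq i}\mathrm{tr}\big(a(x_j)D_yD_mU(t,x_i,\meh,x_j)\big)\,,
$$
and \eqref{eq:reg_uN_4} gives $\big|\frac1{N-1}D_yD_mU(t,x_i,\meh,x_j)-D^2_{x_j,x_j}u^N_i\big|\le C/N^2$, so summing over the $N-1$ indices $j\neq i$ produces $\sum_{j\neq i}\mathrm{tr}(a(x_j)D^2_{x_j,x_j}u^N_i)+O(1/N)$; adding the $j=i$ term $\mathrm{tr}(a(x_i)D^2_{x_i,x_i}u^N_i)$ from the local part completes the sum $\sum_j\mathrm{tr}(a(x_j)D^2_{x_j,x_j}u^N_i)$. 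Collecting, all discrepancies are bounded in $L^\infty$ by $C/N$, which is the asserted estimate on $r^N_i$.

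The main obstacle is the bookkeeping of the two $O(1/N)$ contributions and making sure the constants are uniform: one has to confirm that $D_xU(t,\cdot,m)$ and $D_yD_mU(t,x,m,\cdot)$ enjoy the uniform bounds and moduli of continuity supplied by \eqref{eq:regdmU}, \eqref{eq:LipdmU}, and \eqref{eq:reg_uN_4} (the latter being exactly the point where Proposition \ref{prop:reg_uN_2} is indispensable), and that $\dw(\meh,m_{\bo x}^{N,j})\le C/N$ uniformly in $\bo x\in\Omega^N$ — which holds since $\Omega$ is bounded, so any two probability measures have Wasserstein distance at most $\mathrm{diam}(\Omega)$, and here only two atoms of mass $\frac1{N-1}$ are moved. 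There is also the mild subtlety that the second derivatives $D^2_{x_j,x_j}u^N_i$ exist only almost everywhere (not everywhere), so \eqref{eq:nash_uni} holds a.e.; this matches the statement and is consistent with $u^N_i(t,\cdot)\in W^{2,\infty}$. No genuinely new estimate is required beyond what is already in the excerpt; the argument is, as the author notes, the readaptation of \emph{Proposition 6.3} of \cite{cardSI} with the boundary terms handled as above.
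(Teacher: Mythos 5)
Your proposal is correct and follows essentially the same route as the paper: evaluate \eqref{ME} at $(t,x_i,\meh)$, use the representation formulas of Proposition \ref{prop:reg_uN_2} to convert the nonlocal terms into derivatives of $u^N_i$ with $O(1/N)$ errors, and read off the boundary and terminal conditions from the Neumann conditions and terminal data in \eqref{ME}. The only nitpick is attribution: the Lipschitz continuity of $D_xU$ in $m$ used to swap $D_xU(t,x_j,\meh)$ for $D_{x_j}u^N_j$ comes more directly from \eqref{eq:stimeLip} (or from \eqref{eq:regdmU} via the Taylor expansion in $m$) than from \eqref{eq:LipdmU}, but the fact itself is available and the estimate stands.
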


\section{The convergence result}
	
	Now we turn to the main convergence result. To do that, we consider the functions $(u^N_i)_i$ and the solutions $(v^N_i)_i$  of the system \eqref{nash}. We note that these solutions are \emph{symmetrical}, i.e. there exist two functions $V^N$ and $U^N:\Omega\times\Omega^{N-1}\to\R$ such that, for all $x\in\Omega$, the functions $V^N(x,\cdot)$ and $U^N(x,\cdot)$ are invariant under permutations and, $\forall\,i=1,\dots,N$,
	\begin{align*}
	v^N_i(t,\bo{x})=V^N(x_i,(x_1,\dots,x_{i-1},x_{i+1},\dots,x_N))\,,\\
	u^N_i(t,\bo{x})=U^N(x_i,(x_1,\dots,x_{i-1},x_{i+1},\dots,x_N))\,.\\
	\end{align*}
	We fix $t_0\in[0,T)$, $m_0\in\mathcal{P}(\Omega)$ and $\bo{Z}=(Z^i)_i$ a family of i.i.d. random variables of law $m_0$. We consider the process $\bo{Y}_t=(Y_t^i)_i$ solution of the following system:
	\begin{equation}\label{eq:dynY}
	\begin{cases}
	dY_t^i=-H_p(Y_t^i,D_{x_i}v^N_i(t,\bo{Y}_t))\, dt+\sqrt 2\sigma(Y_t^i)\,dB_t^i\,-dk_t^i\,,\\
	Y_{t_0}^i=Z_i\,,
	\end{cases}
	\end{equation}
	where $k_t^i$ is a \emph{reflected process along the co-normal}.
	
	The last theorem before the main result is the following.
	\begin{thm}\label{thm:penultimo}
		Assume hypotheses \ref{ipotesi} hold. Then, for any $1\le i\le N\,$, we have
		\begin{equation}\label{eq:stimafeedback}
			\E\left[\int_{t_0}^T|D_{x_i}v^N_i(t,\bo{Y}_t)-D_{x_i}u^N_i(t,\bo{Y}_t)|^2\,dt\right]\le\frac{C}{N^2}\,.
		\end{equation}
		Moreover, $\P-a.s.\,$,
		\begin{equation}\label{eq:stimainizio}
			\big|u^N_i(t_0,\bo{Z})-v^N_i(t,\bo{Z})\big|\le\frac CN\,.
		\end{equation}
	\end{thm}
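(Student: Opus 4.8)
The plan is to adapt the coupling argument of \cite{cardSI} (Theorem 6.7 there) to the Neumann setting. The starting point is an energy/duality computation: expand, via It\^o's formula applied to the reflected diffusion $\bo Y_t$, the quantity $d\big(u^N_i(t,\bo Y_t)-v^N_i(t,\bo Y_t)\big)$. Both $u^N_i$ and $v^N_i$ satisfy (up to the error $r^N_i$ with $\norminf{r^N_i}\le C/N$ for $u^N_i$, and exactly for $v^N_i$) the same backward equation \eqref{eq:nash_uni}, \eqref{nash}, with identical terminal data $G(\bo x,m_{\bo x}^{N,i})$. The crucial point is that the reflection term $dk_t^{j}$ enters It\^o's formula only through $a(Y_t^j)D_{x_j}(\cdot)\cdot\nu(Y_t^j)$ on $\{Y_t^j\in\partial\Omega\}$, and both functions satisfy the Neumann boundary conditions $a(x_j)D_{x_j}u^N_i\cdot\nu(x_j)=0$ and $a(x_j)D_{x_j}v^N_i\cdot\nu(x_j)=0$ for every $j$ (these were verified in Theorem \ref{thm:nash_uni} using the two boundary conditions of \eqref{ME}). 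Hence all boundary terms coming from $dk_t^j$ vanish, and the reflection is essentially transparent for this computation -- this is the place where the Neumann structure of the Master Equation pays off.

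Carrying out the It\^o expansion, the second-order and Hamiltonian terms combine as in the periodic case. Writing $w^N_i:=u^N_i-v^N_i$, one obtains after taking expectations
\[
\E\big[w^N_i(t_0,\bo Z)\big]-\E\big[w^N_i(T,\bo Y_T)\big]=\E\!\left[\int_{t_0}^T\Big(\text{(Hamiltonian mismatch terms)}+r^N_i(t,\bo Y_t)\Big)\,dt\right].
\]
The terminal term vanishes since $w^N_i(T,\cdot)=0$. The Hamiltonian mismatch splits into a term controlled quadratically by $|D_{x_i}v^N_i-D_{x_i}u^N_i|$ (using $H_{pp}\le CI$ from hypothesis b., i.e.\ the convexity of $H$, exactly as in the Lasry--Lions monotonicity trick) and cross terms of the form $\sum_{j\ne i}\big(H_p(Y_t^j,D_{x_j}v^N_j)-H_p(Y_t^j,D_{x_j}u^N_j)\big)\cdot D_{x_j}u^N_i$. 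Using \eqref{eq:reg_uN_2}, $|D_{x_j}u^N_i|\le C/N$, so each such cross term is $O(1/N)$ times $|D_{x_j}v^N_j-D_{x_j}u^N_j|$; summing over $j$ and applying Cauchy--Schwarz turns $\frac1N\sum_j$ into something controlled by the square root of the analogous energy for index $j$. Crucially, because the $\bo Y$-dynamics and the coefficients are symmetric under permutations, all the energies $\E\!\int|D_{x_j}v^N_j-D_{x_j}u^N_j|^2$ for $j=1,\dots,N$ are equal, so one closes the estimate: setting $\mathcal E_N:=\E\!\int_{t_0}^T|D_{x_i}v^N_i-D_{x_i}u^N_i|^2dt$, one gets an inequality of the shape $\mathcal E_N\le C\mathcal E_N^{1/2}\cdot\frac1N\cdot\sqrt N\cdot\sqrt{\mathcal E_N}\,\cdots$ — more precisely the bad cross term is absorbed and what survives is $\mathcal E_N\le \frac{C}{N}\,\E[w^N_i(t_0,\bo Z)]+\frac{C}{N^2}$ together with a bound on $\E[w^N_i(t_0,\bo Z)]$; combining these yields both \eqref{eq:stimafeedback} and the $L^1$-type bound on $u^N_i(t_0,\bo Z)-v^N_i(t_0,\bo Z)$.

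To upgrade the $L^1$ estimate to the pointwise almost-sure bound \eqref{eq:stimainizio}, I would run the same It\^o computation but started from an arbitrary deterministic point $\bo x\in\Omega^N$ rather than from the random $\bo Z$, i.e.\ use the process $\bo Y_t$ with deterministic initial datum $\bo Y_{t_0}=\bo x$; the symmetry and the fact that $u^N_j$, $v^N_j$ are Lipschitz uniformly in $N$ (from \eqref{eq:reg_uN_1}, \eqref{eq:regdmU}, \eqref{eq:LipdmU}) let one bound $|u^N_i(t_0,\bo x)-v^N_i(t_0,\bo x)|$ by $C/N$ plus $C$ times the square root of the feedback energy evaluated along this trajectory, and the latter is again $O(1/N^2)$ by the first part. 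Since $\bo x$ is arbitrary, \eqref{eq:stimainizio} follows for all $\bo x$, in particular $\P$-a.s.\ for $\bo x=\bo Z$.

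The main obstacle is the bookkeeping of the reflection terms and the verification that every stochastic integral against $dB^i_t$ and every boundary integral against $d|k|^i_t$ genuinely vanishes in expectation: the martingale terms require $D_{x_j}u^N_i$, $D_{x_j}v^N_i$ to be bounded (true, with the $C/N$ bound for $j\ne i$ and the uniform $W^{2,\infty}$ bound of Theorem \ref{thm:nash_uni}), while the reflection terms require the Neumann conditions to hold pointwise on $\partial\Omega$ for \emph{both} functions simultaneously along the trajectory -- this is where one must invoke carefully the second boundary condition of \eqref{ME} and the regularity $\dm U\in\mathcal C^{2+\alpha,2+\alpha}$. A secondary technical point is that $u^N_i$ only solves \eqref{eq:nash_uni} almost everywhere (it is merely $W^{2,\infty}$ in the $x_j$, $j\ne i$, directions), so It\^o's formula must be justified by an approximation/mollification argument exploiting that the reflected diffusion has an absolutely continuous occupation measure in the interior — again this is a routine but necessary adaptation of the periodic argument.
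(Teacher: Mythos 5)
You have correctly identified the two genuinely Neumann-specific points (the reflection terms in It\^o's formula are killed by the boundary conditions of \emph{both} $u^N_i$ and $v^N_i$, and It\^o's formula must be extended to $W^{2,\infty}$ functions by approximation -- this is exactly the paper's Lemma \ref{lem:prob}), but the core of your estimate does not work as sketched. You expand the first power $w^N_i=u^N_i-v^N_i$ along $\bo Y_t$, take expectations, and claim the coercive term in \eqref{eq:stimafeedback} comes from the convexity of $H$ ``as in the Lasry--Lions monotonicity trick''. Hypothesis \ref{ipotesi}(b) only gives $0<H_{pp}\le CI$, with no uniform lower bound, so the convexity term $H(Y^i_t,D_{x_i}u^N_i)-H(Y^i_t,D_{x_i}v^N_i)-H_p(Y^i_t,D_{x_i}v^N_i)\cdot(D_{x_i}u^N_i-D_{x_i}v^N_i)$ is nonnegative but is \emph{not} bounded below by $c\,|D_{x_i}u^N_i-D_{x_i}v^N_i|^2$; it therefore produces no energy on the left-hand side. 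In the paper (as in \cite{cardSI}) the coercivity has a different origin: It\^o's formula is applied to the \emph{squared} difference $(u^N_i-v^N_i)^2(t,\bo Y_t)$, and the quadratic-variation term $2\sum_j a(\cdot)\,(D_{x_j}u^N_i-D_{x_j}v^N_i)\cdot(D_{x_j}u^N_i-D_{x_j}v^N_i)$ together with the uniform ellipticity of $a$ (hypothesis \ref{ipotesi}(a)) yields the term $2\nu\sum_j\E^{\bo Z}\int|D_{x_j}u^N_i-D_{x_j}v^N_i|^2$. Moreover, in your first-power expansion the cross terms are bounded by $\frac CN\sum_{j\neq i}|D_{x_j}u^N_j-D_{x_j}v^N_j|$, which after Cauchy--Schwarz is of size $\mathcal E_N^{1/2}$ with no residual factor $1/N$; absorbing it by Young's inequality leaves an $O(1)$ remainder and the estimate cannot close at the $1/N^2$, or even $1/N$, scale. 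In the squared version each cross term carries the extra factor $|u^N_i-v^N_i|$, and it is this structure that allows the paper's two-step closure: Young's inequality, then summation over $i$ plus Gronwall to get an $O(1/N)$ bound for the sum, then re-insertion into the single-index inequality plus Gronwall again to reach $C/N^2$.

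Your route to the almost-sure bound \eqref{eq:stimainizio} also needs repair. First, it rests on the flawed first part; second, your closure uses the equality of the energies over $j$, which holds by exchangeability only for the i.i.d.\ initial datum $\bo Z$ and unconditional expectations, not for an arbitrary deterministic starting point $\bo x$ (nor under conditioning on $\bo Z$), so the ``restart from a deterministic $\bo x$'' argument would have to redo the sum-over-$i$ Gronwall step rather than invoke symmetry. The paper avoids the issue entirely: all estimates are taken under the conditional expectation $\E^{\bo Z}$, and since $\bo Y_{t_0}=\bo Z$, evaluating the resulting bound $\sup_t\E^{\bo Z}\big[|u^N_i(t,\bo Y_t)-v^N_i(t,\bo Y_t)|^2\big]\le C/N^2$ at $t=t_0$ gives $|u^N_i(t_0,\bo Z)-v^N_i(t_0,\bo Z)|^2\le C/N^2$ $\P$-a.s.\ directly, with no separate upgrade.
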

	
	The proof is almost exactly the same of \emph{Theorem 6.2.1} of \cite{cardSI}, but here we need to use an extension of the Ito's formula, with functions $\phi$ not necessarily $\mathcal{C}^2$ in the space variable, and with a reflection term in the process. This generalization is stated in the following Lemma.
	
	\begin{lem}\label{lem:prob}
	Let $\phi:[0,T]\times\Omega\to\R$ a $W^{2,\infty}$ function with respect to $x$ and a $\mathcal{C}^1$ function with respect to $t$, such that
	$$
	\bdone{\phi}=0\,.
	$$
	Let $m_0\in\mathcal{P}(\Omega)$ and let $X_t$ be a process in the probability space $(\tilde{\Omega},(\mathcal{F}_t)_t,\P)$, with initial density $m_0$, satisfying
	$$
	dX_t=b(t,X_t)\,dt+\sigma(X_t)\,dB_t-dk_t^i\,,
	$$
	where $(B_t)_t$ is a Brownian motion, $b$ and $\sigma$ are bounded functions respectively in $L^\infty$ and $\mathcal{C}^{1+\alpha}$, with $\sigma$ a uniformly elliptic matrix, and $k_t^i$ is a reflected process along the co-normal.
	
	Then the following formula holds $\forall t$ and a.s. in $\omega\in\tilde{\Omega}\,:$
	\begin{equation*}
	\begin{split}
	\phi(t,X_t)=\phi(0,X_0) & + \int_0^t\left(\phi_t(s,X_s)+\miezz\mathrm{tr}(a(X_s)D^2\phi(s,X_s))+b(s,X_s)\cdot D\phi(s,X_s)\right)ds\\
	& + \int_0^t\sigma(X_s)D\phi(s,X_s)\,dB_s\,.
	\end{split}
	\end{equation*}
	\begin{proof}
	We consider $\phi^n$ a sequence of $\mathcal{C}^{1,2,N}$ functions, bounded uniformly in $n$ together with their derivatives, such that $\phi^n\to\phi$ pointwise together with its first order derivatives in space and time, and almost everywhere for the second order derivatives in space.
	
	We define $a=\sigma\sigma^*$. The classical Ito's formula for $\phi^n$ tells us that
	\begin{equation*}
	\begin{split}
		\phi^n(t,X_t)=\phi^n(0,X_0) & + \int_0^t\left(\phi^n_t(s,X_s)+\miezz\mathrm{tr}(a(X_s)D^2\phi^n(s,X_s))+b(s,X_s)\cdot D\phi^n(s,X_s)\right)ds\\
		& + \int_0^t\sigma(X_s)D\phi^n(s,X_s)\,dB_s-\int_0^t a(X_s)D\phi^n(s,X_s)\nu(X_s)\,d|k|_s\,,
		\end{split}
	\end{equation*}
	and so, since $\phi^n$ satisfies $a(x)D\phi^n\cdot\nu_{|\partial\Omega}=0\,,$
	\begin{equation*}
	\begin{split}
		\phi^n(t,X_t)=\phi^n(0,X_0) & + \int_0^t\left(\phi^n_t(s,X_s)+\miezz\mathrm{tr}(a(X_s)D^2\phi^n(s,X_s))+b(s,X_s)\cdot D\phi^n(s,X_s)\right)ds\\
		& + \int_0^t\sigma(X_s)D\phi^n(s,X_s)\,dB_s\,.
	\end{split}
	\end{equation*}
	Since $\phi^n\to\phi$ pointwise, we can pass to the limit for the terms outside the integrals.
	
	For the term in the deterministic integral, we note that the law $m(t)$ of the process $X_t$ satisfies the following Fokker-Planck equation:
	$$
	\begin{cases}
	m_t-\mathrm{div}(a(x)Dm)-\mathrm{div}(m\tilde{b})=0\,,\\
	m(0)=m_0\,,\\
	\left[a(x)Dm+m\tilde{b}\right]\cdot\nu_{|\partial\Omega}=0\,,
	\end{cases}
	$$
	with $\tilde{b}(x)=b(x)+\big(\sum\limits_j\partial_{x_j} a_{ji}(x)\big)_i$. So, thanks to \emph{Proposition 5.3} of \cite{memedesimoSI}, we have that $m$ is globally bounded in $L^p(Q_T)$ for some $p>1\,.$ Hence, we have
	\begin{align*}
	&\E\left[\int_0^t\left|\phi^n_t(s,X_s)+\miezz\mathrm{tr}(a(X_s)D^2\phi^n(s,X_s))+b(s,X_s)\cdot D\phi^n(s,X_s)\right.\right.\\
	&\qquad\left.\left.-\phi_t(s,X_s)-\miezz\mathrm{tr}(a(X_s)D^2\phi(s,X_s))-b(s,X_s)\cdot D\phi(s,X_s)\right|ds\right]\\
	&\qquad\le \inti\left(|\phi^n_t-\phi_t|+\miezz|\mathrm{tr}(aD^2(\phi^n-\phi))|+|b\cdot(D\phi^n-D\phi)|\right)m(s,x)\,dxds\to0\,,
	\end{align*}
	where the dominated convergence is guaranteed by the $a.e.$ convergence of $\phi^n_t,D\phi^n,D^2\phi^n$ and the global boundedness of $m$ in $L^p$ and of $\phi^n_t$, $D\phi^n$ and $D^2\phi^n$ in $L^\infty$.
	
	As regards the last term, the $a.s.$ convergence is guaranteed by the property of the stochastic integral. Actually, we have
	\begin{align*}
	\E\left[\left(\int_0^t\sigma_s D\phi^n(s,X_s)\,dB_s-\int_0^t\sigma_sD\phi(s,X_s)dB_s\right)^2\right] &=\E\left[\int_0^t|\sigma_s(D\phi^n-D\phi)(s,X_s)|^2\,ds\right]\\
	&\le C\norminf{D\phi^n-D\phi}^2\to0\,.
	\end{align*}
	This concludes the Lemma.
	\end{proof}
	\end{lem}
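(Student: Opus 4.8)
The plan is to approximate the $W^{2,\infty}$ function $\phi$ by smooth functions satisfying the co-normal boundary condition, apply the classical Itô formula for reflected diffusions to each member of the sequence (which produces an extra boundary term against $d|k|_s$ that vanishes precisely because of the Neumann condition), and then pass to the limit term by term. The only genuine work is justifying the three limit passages: the terms evaluated at fixed times $X_0$ and $X_t$, the Lebesgue integral in $ds$, and the Itô stochastic integral.

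First I would construct the approximating sequence. Since $\phi(t,\cdot)\in W^{2,\infty}(\Omega)$ with $\bdone{\phi}=0$ and $\phi$ is $\mathcal{C}^1$ in $t$, a standard mollification-and-correction procedure (mollify in the interior, then add a small corrector supported near $\partial\Omega$ to restore the boundary condition $a(x)D\phi^n\cdot\nu_{|\partial\Omega}=0$, using that $\partial\Omega\in\mathcal{C}^{2+\alpha}$) yields $\phi^n\in\mathcal{C}^{1,2,N}$ with $\phi^n$, $\phi^n_t$, $D\phi^n$, $D^2\phi^n$ uniformly bounded in $L^\infty$, converging pointwise to $\phi$, $\phi_t$, $D\phi$ respectively, and with $D^2\phi^n\to D^2\phi$ almost everywhere. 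For each $n$ the classical Itô formula for the reflected process $X_t$ (see \cite{snitzmanSI}) gives the displayed identity with the additional boundary term $-\int_0^t a(X_s)D\phi^n(s,X_s)\nu(X_s)\,d|k|_s$, which is identically zero since $\phi^n$ satisfies the co-normal condition on $\partial\Omega$ and $d|k|_s$ is supported on $\{X_s\in\partial\Omega\}$.

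Next I would pass to the limit. The boundary terms $\phi^n(t,X_t)\to\phi(t,X_t)$ and $\phi^n(0,X_0)\to\phi(0,X_0)$ follow immediately from pointwise convergence. For the Lebesgue integral, the key point is that the law $m(s)$ of $X_s$ solves a Fokker--Planck equation with co-normal boundary condition, so by \emph{Proposition 5.3} of \cite{memedesimoSI} the density $m$ is bounded in $L^p(Q_T)$ for some $p>1$; combining the $L^p$ bound on $m$ with the $L^\infty$ bound and a.e.\ convergence of $\phi^n_t,D\phi^n,D^2\phi^n$, dominated convergence in $L^1(Q_T;m\,dx\,ds)$ gives the convergence of the $ds$-integral in expectation. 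For the stochastic integral, Itô's isometry converts the $L^2$ error into $\E\big[\int_0^t|\sigma(X_s)(D\phi^n-D\phi)(s,X_s)|^2\,ds\big]$, which is bounded by $C\,t\,\norminf{D\phi^n-D\phi}^2\to0$ (or, if one only has a.e.\ convergence of $D\phi^n$, again by dominated convergence using the $L^p$ bound on $m$), so along a subsequence the stochastic integrals converge $\P$-a.s. Since all other terms converge deterministically (a.s.), the identity passes to the limit and holds for all $t$ and a.s.\ in $\omega$.

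The main obstacle is the approximation step: one must produce $\phi^n\in\mathcal{C}^{1,2}$ that \emph{simultaneously} approximate $\phi$ in the required senses \emph{and} exactly satisfy the co-normal Neumann condition $a(x)D\phi^n\cdot\nu_{|\partial\Omega}=0$, since without the exact boundary condition the spurious $d|k|_s$ term does not drop out. This is a technical but standard construction using the regularity of $\partial\Omega$ and a tubular-neighborhood corrector; once it is in place, the three limit passages are routine given the $L^p$ bound on the density of $X_s$.
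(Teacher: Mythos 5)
Your proposal follows essentially the same route as the paper: approximate $\phi$ by $\mathcal{C}^{1,2,N}$ functions satisfying the exact co-normal condition (so the $d|k|_s$ term vanishes), apply the classical It\^o formula, and pass to the limit using the Fokker--Planck equation for the law of $X_t$, the $L^p$ bound from \emph{Proposition 5.3} of \cite{memedesimoSI} with dominated convergence for the $ds$-integral, and It\^o's isometry for the stochastic integral. Your extra care about the boundary-corrector construction and about extracting an a.s.\ convergent subsequence for the stochastic integral is if anything slightly more precise than the paper's argument, but it is the same proof.
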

	
	We note that, if there are no reflection, the boundary condition for $\phi$ can be removed.

	\begin{proof}[Sketch of the Proof of Theorem \ref{thm:penultimo}]

	Without loss of generality, we work with $t_0=0$ and we start proving \eqref{eq:stimafeedback}. To simplify the rest of the proof, we will use the following notations:
	$$
	\begin{array}{ll}
	\ui=u^N_i(t,\bo{Y}_t)\,, &\qquad \duij=D_{x_j}u^N_i(t,\bo{Y}_t)\,,\\
	\vi=v^N_i(t,\bo{Y}_t)\,, &\qquad \dvij=D_{x_j}v^N_i(t,\bo{Y}_t)\,.
	\end{array}
	$$
	
	Since $u^N_i$ and $v^N_i$ satisfy Neumann boundary conditions, we can use Lemma \ref{lem:prob} on the processes $\ui$ and $\vi$. Then, we apply the classical Ito's formula to the process $\big(\ui-\vi\big)^2$ to obtain
	$$
	d\left(\ui-\vi\right)^2=(A_t+B_t)\,dt+2\sqrt 2\left(\ui-\vi\right)\sum\limits_j\left[\sigma(Y_t^i)(\dvij-\duij)\right]\,dB_t^j\,,
	$$
	where
	\begin{align*}
	A_t &= 2(\ui-\vi)\left(H(Y_t^i,\duii)-H(Y_t^i,\dvii)\right)\\
	&- 2(\ui-\vi)\left(\duii\big(H_p(Y_t^i,\duii)-H_p(Y_t^i,\dvii)\big)\right)\\
	&- 2(\ui-\vi)\left((\duii-\dvii)H_p(Y_t^i,\dvii)-r^N_i(t,\bo{Y}_t)\right)
	\end{align*}
	and
	\begin{align*}
	B_t &= 2\sum\limits_j a(Y_t^i)(\duij-\dvij)\cdot(\duij-\dvij)\\
	&- 2(\ui-\vi)\sum\limits_j \duij\left(H_p(Y_t^j,\dvjj)-H_p(Y_t^j,\dujj)\right)\,.
	\end{align*}

	Now we integrate from $t$ to $T$ the above formula and take the conditional expectation given $\bo{Z}$. Using hypothesis $a.$ of \ref{ipotesi} and the previous results, in particular the bounds on $\duij$ and $r^N_i$, we get
	\begin{align*}
	&\,\E^{\bo{Z}}\left[|\ui-\vi|^2\right]+2\nu\sum\limits_j \E^{\bo{Z}}\left[\int_t^T|\duijs-\dvijs|^2\,ds\right]\\
	\le &\,\frac CN\int_t^T\E^{\bo{Z}}[|\uis-\vis|]\,ds+C\int_t^T\E^{\bo{Z}}[|\uis-\vis|\cdot|\duiis-\dviis|]\,ds\\
	+ &\,\frac CN\sum\limits_{j\neq i}\int_t^T\E^{\bo{Z}}[|\uis-\vis|\cdot|\dujjs-\dvjjs|]\,ds\,.
	\end{align*}
	By a standard convexity argument and a generalized Young's inequality,
	\begin{align}
	\E^{\bo{Z}}\left[|\ui-\vi|^2\right] & +\nu\E^{\bo{Z}}\left[\int_t^T|\duiis-\dviis|^2\,ds\right]\label{qui}\\
	\le \frac C{N^2}+C\int_t^T\E^{\bo{Z}}[|\uis-\vis|^2]\,ds & +\frac\nu{2N}\sum\limits_j\E^{\bo{Z}}\left[\int_t^T|\dujjs-\dvjjs|^2\,ds\right]\,.\notag
	\end{align}
	The last term in the right-hand side can be removed by taking the mean of the inequalities over $i\in 1,\dots,N$. Hence, taking the mean and using Gronwall's Lemma, we obtain
	$$
	\supo\left[\sum\limits_i \E^{\bo{Z}}\big[|\ui-\vi|^2\big]\right]+\sum\limits_i\E^{\bo Z}\left[\int_0^T|\duii-\dvii|^2\,dt\right]\le\frac CN\,.
	$$
	Using this estimation in \eqref{qui} and applying again Gronwall's Lemma, we get
	$$
	\supo \E^{\bo{Z}}\big[|\ui-\vi|^2\big]+\E^{\bo Z}\left[\int_t^T|\duiis-\dviis|^2\,ds\right]\le\frac C{N^2}\,.
	$$
	Taking the integral at $t=0$, we prove \eqref{eq:stimafeedback}. On the other hand, evaluating the term in the $sup$ at $t=0$, we prove \eqref{eq:stimainizio} and conclude the Theorem.
	\end{proof}

	Now we are ready to prove the main theorem of this chaper.
	
	\begin{proof}[Proof of Theorem \ref{thm:Teorema}]
	We start choosing $m_0=1$. Then, \eqref{eq:stimainizio} implies
	$$
	|U(t_0,Z_i,m_{\bo Z}^{N,i})-v^N_i(t_0,\bo Z)|\le \frac CN\qquad\P-a.s.\,.
	$$
	Since the support of $m_0$ is $\Omega$, this means, thanks to the continuity of $v^N_i$ and $U$, that
	$$
	|U(t_0,x_i,\meh)-v^N_i(t_0,\bo x)|\le\frac CN\qquad\forall\bo x\in\Omega^N\,.
	$$
	By te Lipschitz continuity of $U$, we have
	$$
	|U(t_0,x_i,\meh)-U(t_0,x_i,m_{\bo x}^N)|\le C\dw(\meh,m_{\bo x}^N)\le\frac CN\,.
	$$
	Putting together the last two inequalities, we obtain \eqref{eq:risult1}.
	
	To prove \eqref{eq:risult2}, we use the results of \cite{2SI,30SI,35SI} in order to obtain
	$$
	\int_{\Omega^{N-1}}|u^N_i(t_0,\bo x)-U(t_0,x_i,m_0)|\prod\limits_{j\neq i}m_0(dx_j)\le C\int_{\Omega^{N-1}}\dw(\meh,m_0)\prod\limits_{j\neq i}m_0(dx_j)\le C\omega_N\,,
	$$
	where $\omega_N$ is defined in \eqref{eq:defomega}. With this inequality, we can conclude:
	\begin{align*}
	&\norm{w^N_i(t_0,\cdot,m_0)-U(t_0,\cdot,m_0)}_{L^1(m_0)}\\
	=&\into\left|\int_{\Omega^{N-1}}\big(v^N_i(t_0,\bo x)-U(t_0,x_i,m_0)\big)\prod\limits_{j\neq i}m_0(dx_j)\right|m_0(dx_i)\\
	\le &\E\left[|v^N_i(t,\bo Z)-u^N_i(t,\bo Z)|\right]+\int_{\Omega^N} |u^N_i(t_0,\bo x)-U(t,x_i,m_0)|\prod\limits_{j\neq i} m_0(dx_j)\\
	\le &\frac CN+C\omega_N\le C\omega_N\,.
	\end{align*}
	\end{proof}
	
	\section{Convergence of the trajectories}
	We conclude with a last result concerning the convergence of the trajectories. To do that, we fix as before $t_0\in[0,T)$, $m_0\in\mathcal P(\Omega)$ and $\bo Z=(Z^i)_i$ a family of i.i.d. random variables of law $m_0$.
	
	We consider the process $\bo X_t=(X_t^i)_i$, solution of the following system:
	\begin{equation}\label{eq:defX}
	\begin{cases}
	dX_t^i=-H_p(X_t^i,D_{x_i}u^N_i(t,\bo X_t))\,dt+\sqrt 2\sigma(X_t^i)dB_t^i-a(X_t^i)\nu(X_t^i)d|k|_t^{i,X}\,,\\
	X_{t_0}^i=Z^i\,,
	\end{cases}
	\end{equation}
	and the process $\bo Y_t=(Y_t^i)_i$, already defined as the solution of
	\begin{equation*}
		\begin{cases}
			dY_t^i=-H_p(Y_t^i,D_{x_i}v^N_i(t,\bo Y_t))\,dt+\sqrt 2\sigma(Y_t^i)dB_t^i-a(Y_t^i)\nu(Y_t^i)d|k|_t^{i,Y}\,,\\
			Y_{t_0}^i=Z^i\,.
		\end{cases}
	\end{equation*}
	Here, $k_t^{i,X}$ and $k_t^{i,Y}$ denote respectively the reflected process along the co-normal for the processes $(X_t^i)_t$ and $(Y_t^i)_t\,.$
	
	The last theorem we want to prove is the following:
	\begin{thm}
	Assume hypotheses \ref{ipotesi} hold. Assume, moreover, that $a=\sigma\sigma^*$ with $\sigma\in W^{1,\infty}(\overline\Omega)$ and $a\in\mathcal{C}^2(\overline\Omega)\,$. Then, for any $1\le i\le N$, we have
	\begin{equation}\label{eq:ultimathule}
	\supt\E\left[|X_t^i-Y_t^i|^2\right]\le \frac C{N^2}\,.
	\end{equation}
	\end{thm}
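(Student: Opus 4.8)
The strategy is the classical coupling argument for McKean–Vlasov type trajectories, comparing $X_t^i$ and $Y_t^i$ driven by the \emph{same} Brownian motions $B_t^i$ and the same initial data $Z^i$, but with drifts governed respectively by $D_{x_i}u^N_i$ and $D_{x_i}v^N_i$. The key point is that the reflection term, although nonsmooth, is \emph{monotone} in the sense of Skorokhod: for a convex-type domain (or more generally under the $\mathcal C^2$ regularity of the boundary together with $a\in\mathcal C^2(\overline\Omega)$) the co-normal reflection satisfies an inequality of the form
\begin{equation*}
(X_t^i-Y_t^i)\cdot\big(a(X_t^i)\nu(X_t^i)\,d|k|_t^{i,X}-a(Y_t^i)\nu(Y_t^i)\,d|k|_t^{i,Y}\big)\le C\,|X_t^i-Y_t^i|^2\,\big(d|k|_t^{i,X}+d|k|_t^{i,Y}\big)\,,
\end{equation*}
so that the reflection contributes only a controllable term, or even a favourable-sign term, after applying Itô's formula to $|X_t^i-Y_t^i|^2$. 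This is the standard device (going back to Lions–Sznitman and Saisho) that makes reflected SDE comparison estimates work, and it is precisely why the additional hypothesis $\sigma\in W^{1,\infty}$, $a\in\mathcal C^2(\overline\Omega)$ is imposed here.

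First I would apply Itô's formula to $|X_t^i-Y_t^i|^2$. The Brownian parts cancel in the drift and leave a martingale $M_t$; the diffusion difference $\sqrt2(\sigma(X_t^i)-\sigma(Y_t^i))$ produces, via the Lipschitz bound on $\sigma$, a term $\le C|X_t^i-Y_t^i|^2\,dt$; the drift difference produces $-2(X_t^i-Y_t^i)\cdot\big(H_p(X_t^i,D_{x_i}u^N_i(t,\bo X_t))-H_p(Y_t^i,D_{x_i}v^N_i(t,\bo Y_t))\big)$, and the reflection terms are handled by the Skorokhod inequality above. Splitting the drift difference through the intermediate point $H_p(Y_t^i,D_{x_i}u^N_i(t,\bo Y_t))$ and using that $H$ is smooth with bounded $H_{pp}$ (hypothesis b.) together with the Lipschitz bounds on the derivatives of $u^N_i$ from Proposition \ref{prop:reg_uN_2} and Theorem \ref{thm:nash_uni}, one bounds this by
\begin{equation*}
C|X_t^i-Y_t^i|^2+C|X_t^i-Y_t^i|\,\Big(\sum_j|X_t^j-Y_t^j|\Big)\frac1N+C|X_t^i-Y_t^i|\,\big|D_{x_i}u^N_i(t,\bo Y_t)-D_{x_i}v^N_i(t,\bo Y_t)\big|\,.
\end{equation*}
Here the factor $\frac1N$ comes from the Lipschitz-in-$\bo x$ bound $\|D_{x_j}u^N_i\|_{\mathrm{Lip}}\le C/N$ for $j\ne i$ obtained in Proposition \ref{prop:reg_uN_2}, so that the contribution of the other coordinates $x_j$ to $D_{x_i}u^N_i$ is small.

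Next I would take expectations (the martingale part drops), sum over $i=1,\dots,N$ to exploit the exchangeability and absorb the cross terms $\sum_{i,j}$, use Young's inequality, and then invoke Theorem \ref{thm:penultimo}: the feedback estimate \eqref{eq:stimafeedback} controls $\E\int_0^T|D_{x_i}v^N_i(t,\bo Y_t)-D_{x_i}u^N_i(t,\bo Y_t)|^2\,dt\le C/N^2$, which is exactly the source of the $N^{-2}$ rate. Setting $\rho(t):=\frac1N\sum_i\E[\sup_{s\le t}|X_s^i-Y_s^i|^2]$ (with a Burkholder–Davis–Gundy step to bring the supremum inside, absorbing the resulting martingale-bracket term into $\rho$), one arrives at an integral inequality $\rho(t)\le C/N^2+C\int_{t_0}^t\rho(s)\,ds$, and Gronwall's Lemma gives $\rho(T)\le C/N^2$; by exchangeability this is $\sup_t\E[|X_t^i-Y_t^i|^2]\le C/N^2$ for each fixed $i$, which is \eqref{eq:ultimathule}.

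**Main obstacle.** The delicate point is the treatment of the reflection terms: one must justify the Skorokhod-type monotonicity inequality for the co-normal reflection (not the Euclidean normal), using $a\in\mathcal C^2(\overline\Omega)$ and the $\mathcal C^{2+\alpha}$ regularity of $\partial\Omega$ to control the oblique direction $a(x)\nu(x)$, and one must make sure the local-time measures $d|k|^{i,X}$, $d|k|^{i,Y}$ do not carry uncontrolled mass — this is where boundedness of the drifts (a consequence of the regularity of $u^N_i$, $v^N_i$ and the Lipschitz character of $H_p$) and the ellipticity of $a$ enter, guaranteeing the usual a priori bounds on the reflection processes. A secondary technical issue is the low regularity of $D_{x_i}u^N_i$ (only $W^{2,\infty}$, hence $D_{x_i}u^N_i$ merely Lipschitz), so strictly speaking the application of Itô's formula to the difference of the two processes should be read through the approximation procedure of Lemma \ref{lem:prob}; since Itô's formula for $\xi\mapsto|\xi|^2$ needs no smoothness of $\phi$, this causes no real trouble, but it should be acknowledged.
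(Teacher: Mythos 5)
There is a genuine gap, and it sits exactly at the point you flag as the ``main obstacle'': the treatment of the reflection terms. The Skorokhod-type inequality you invoke, $(X_t^i-Y_t^i)\cdot\big(a(X_t^i)\nu(X_t^i)\,d|k|_t^{i,X}-a(Y_t^i)\nu(Y_t^i)\,d|k|_t^{i,Y}\big)\le C|X_t^i-Y_t^i|^2\big(d|k|_t^{i,X}+d|k|_t^{i,Y}\big)$, is not available for co-normal (oblique) reflection: when $X_t^i\in\partial\Omega$ the vector $a(X_t^i)\nu(X_t^i)$ has a tangential component, so $\langle a(X_t^i)\nu(X_t^i),X_t^i-Y_t^i\rangle$ is in general only of order $|X_t^i-Y_t^i|$, not bounded below by $-C|X_t^i-Y_t^i|^2$; the quadratic bound you assert holds for the \emph{normal} direction (via the $\mathcal{C}^2$ regularity of $\partial\Omega$, or convexity), but not for $a\nu$. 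Moreover, even if a bound of the form $C|X-Y|^2\,d|k|$ did hold, it could not be closed by a plain Gronwall argument in $dt$, since $d|k|^{i,X}$, $d|k|^{i,Y}$ are singular measures in time whose mass per unit time is not controlled; one would need an additional weight (an exponential of the local times or of the distance to the boundary) together with exponential moments, none of which appears in your sketch. This is precisely why the paper states that the argument of Cardaliaguet--Delarue--Lasry--Lions cannot simply be readapted, and why the extra hypotheses $\sigma\in W^{1,\infty}$, $a\in\mathcal{C}^2(\overline\Omega)$ are introduced.

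The paper's resolution, inspired by \cite{cardconvtrajSI}, is to apply It\^o's formula not to $|X_t^i-Y_t^i|^2$ but to
$\psi(t,x,y)=e^{-\alpha(\delta t+d(x)+d(y))}\langle(a^{-1}(x)+a^{-1}(y))(x-y),x-y\rangle$,
where $d$ is (a smooth extension of) the oriented distance to $\partial\Omega$. The $a^{-1}$-weighted quadratic form is the crucial device you are missing: pairing $D_x\psi$ with the co-normal $a(X_s^i)\nu(X_s^i)$ produces, to leading order, $\langle X_s^i-Y_s^i,\nu(X_s^i)\rangle$ plus terms of size $|X_s^i-Y_s^i|^2$, i.e.\ the oblique direction is converted back into the true normal, for which the Taylor expansion of $d$ (together with $d(X_s^i)=0$ on the support of $d|k|^{i,X}$ and $d\ge0$) applies; the factor $-\alpha\,Dd$ coming from the exponential weight then makes the whole boundary contribution non-positive for $\alpha$ large, and the factor $e^{-\alpha\delta t}$ with $\delta$ large absorbs the $C\alpha^2$ terms generated in the drift. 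Once the boundary terms are shown to have a sign, the remainder of your outline (splitting the drift difference, the $1/N$ bound on $D_{x_j}u^N_i$ from Proposition \ref{prop:reg_uN_2}, the feedback estimate \eqref{eq:stimafeedback} from Theorem \ref{thm:penultimo} as the source of the $N^{-2}$ rate, summation over $i$ and Gronwall) does match the paper's proof; also note that \eqref{eq:ultimathule} asks for $\sup_t\E[\cdot]$, so no Burkholder--Davis--Gundy step is needed. As written, however, your argument would not go through without supplying the weighted-functional mechanism (or an equivalent substitute) for the reflection terms.
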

	A similar result was proved in the same article \cite{cardSI}. Unfortunately, the reflecting term in the processes does not allow to follow and readapt the same ideas.
	
	To prove this result, we will be inspired by the ideas developed in \cite{cardconvtrajSI}.
	
	\begin{proof}
	Let $d(\cdot)$ be the oriented distance function from the boundary $\partial\Omega$, defined in this way:
	$$
	d(x)=\left\{
	\begin{array}{rl}
	 \mathrm{dist}(x,\partial\Omega) & \quad x      \in\partial\Omega\,,\\
	-\mathrm{dist}(x,\partial\Omega) & \quad x\notin\partial\Omega\,.
	\end{array}
	\right.
	$$
	
	Thanks to \cite{cingulSI} and the regularity of $\Omega$ we know that $d\in\mathcal{C}^{2+\alpha}$ in a neighbourhood of the boundary of $\Omega$. Hence, a classical regularizing argument allows us to consider a non-negative $\mathcal{C}^{2+\alpha}(\overline\Omega)$, called again $d$, which coincides with the oriented distance in a neighbourhood of the boundary.
	
	We want to apply Ito's formula to the following quantity:
	$$
	\psi(t,X_t^i,Y_t^i):=e^{-\alpha(\delta t+d(X_t^i)+d(Y_t^i))}\, \langle (a^{-1}(X_t^i)+a^{-1}(Y_t^i))(X_t^i-Y_t^i),X_t^i-Y_t^i\rangle\,, 
	$$
	for large $\alpha,\delta>0$ which will e chosen later. We start computing the first and second derivatives of $\psi.$ To do that, we introduce the following notations:
	\begin{itemize}
		\item For a differentiable matrix-valued function $a(\cdot):\Omega\to\R^{d\times d}$, $x\in\Omega$ and $v\in\R^d$, the quantity $Da(x)(v)$ stands for the matrix whose lines are the derivatives $(\partial_{x_k}a(x)v)_k$, whereas for $w\in\R^d$ the quantity $\langle Da(x)(v),w\rangle$ will denote the vector $(\langle \partial_{x_k}a(x)v,w\rangle)_k\,;$
		\item For $a(\cdot)$ as before, $x\in\Omega$ and $v,w\in\R^d$, the quantity $\langle D^2a(x)(v),w\rangle$ stands for the matrix whose entries are given by $(\langle \partial^2{x_ix_j}a(x)v,w\rangle)_{i,j}\,;$
		\item For $a(\cdot)$ as before, $x\in\Omega$ and $v,w\in\R^d$, the quantity $\langle D^2 a(x)(v),w\rangle$ stands for the matrix whose entries are given by $(\langle\partial^2{x_ix_j}a(x)v,w\rangle)_{i,j}\,;$
		\item For $v,w\in\R^d$, the tensor product $v\otimes w$ will denote the matrix $v^T w=(v_iw_j)_{i,j}\,;$
		\item For a matrix $A\in\R^{d\times d}$ and $1\le k\le d$, the vector corresponding to the $k$-th lines of $A$ will be denoted by $A_k$.
	\end{itemize}

	With these notations, we can start with the computation of the first order derivatives:
	\begin{align*}
	\partial_t\psi(t,x,y) &= -\alpha\delta\psi(t,x,y) = -\alpha\delta e^{-\alpha(\delta t+d(x)+d(y))}\langle \left(a^{-1}(x)+a^{-1}(y)\right)(x-y),x-y\rangle\,;\\
	D_x\psi(t,x,y) &=\espot\left[-\alpha\langle\left(a^{-1}(x)+a^{-1}(y)\right)(x-y),x-y\rangle Dd(x)\right.\\
	&\left.+\,\langle Da^{-1}(x)(x-y),x-y\rangle+2(a^{-1}(x)+a^{-1}(y))(x-y)\right]\,;\\
	D_y\psi(t,x,y) &=\espot\left[-\alpha\langle\left(a^{-1}(x)+a^{-1}(y)\right)(x-y),x-y\rangle Dd(y)\right.\\
	&\left.+\,\langle Da^{-1}(y)(x-y),x-y\rangle-2(a^{-1}(x)+a^{-1}(y))(x-y)\right]\,;
	\end{align*}
	As regards the second order derivatives, we have
	\begin{align*}
	D^2_{xx}\psi(t,x,y) &=\espot \left[\alpha^2\langle\left(a^{-1}(x)+a^{-1}(y)\right)(x-y),x-y\rangle Dd(x)\otimes Dd(x)\right.\\
	&- \alpha\langle Da^{-1}(x)(x-y),x-y\rangle\otimes Dd(x)-2\alpha\left(a^{-1}(x)+a^{-1}(y)\right)(x-y)\otimes Dd(x)\\
	&- \alpha\langle\left(a^{-1}(x)+a^{-1}(y)\right)(x-y),x-y\rangle D^2d(x)+\langle D^2 a^{-1}(x)(x-y),x-y\rangle\\
	&- \alpha Dd(x)\otimes\langle Da^{-1}(x)(x-y),x-y\rangle -2\alpha Dd(x)\otimes\left[(a^{-1}(x)+a^{-1}(y))(x-y)\right]\\
	&+ \left.4Da^{-1}(x)(x-y)+2a^{-1}(x)+2a^{-1}(y)\right]\,,\\\\
	D^2_{xy}\psi(t,x,y) &=\espot \left[\alpha^2\langle\left(a^{-1}(x)+a^{-1}(y)\right)(x-y),x-y\rangle Dd(x)\otimes Dd(y)\right.\\
	&- \alpha\langle Da^{-1}(x)(x-y),x-y\rangle\otimes Dd(y)-2\alpha\left(a^{-1}(x)+a^{-1}(y)\right)(x-y)\otimes Dd(y)\\
	&- \alpha Dd(x)\otimes\langle Da^{-1}(y)(x-y),x-y\rangle +2\alpha Dd(x)\otimes\left[(a^{-1}(x)+a^{-1}(y))(x-y)\right]\\
	&- \left.2Da^{-1}(x)(x-y)+2Da^{-1}(y)(x-y)-2a^{-1}(x)-2a^{-1}(y)\right]\,,\\\\
	D^2_{yy}\psi(t,x,y) &=\espot \left[\alpha^2\langle\left(a^{-1}(x)+a^{-1}(y)\right)(x-y),x-y\rangle Dd(y)\otimes Dd(y)\right.\\
	&- \alpha\langle Da^{-1}(y)(x-y),x-y\rangle\otimes Dd(y)+2\alpha\left(a^{-1}(x)+a^{-1}(y)\right)(x-y)\otimes Dd(y)\\
	&- \alpha\langle\left(a^{-1}(x)+a^{-1}(y)\right)(x-y),x-y\rangle D^2d(y)+\langle D^2 a^{-1}(y)(x-y),x-y\rangle\\
	&- \alpha Dd(y)\otimes\langle Da^{-1}(y)(x-y),x-y\rangle +2\alpha Dd(y)\otimes\left[(a^{-1}(x)+a^{-1}(y))(x-y)\right]\\
	&- \left.4Da^{-1}(y)(x-y)+2a^{-1}(x)+2a^{-1}(y)\right]\,.
	\end{align*}

	Now we are ready to use Ito's formula. Since $X_{t_0}=Y_{t_0}$, and taking into account that, $\forall\, A\in\mathrm{Sym}(\R^{d\times d})$, $\forall\, v,w\in\R^d$,
	$$
	\mathrm{tr}(A(v\otimes w))=\mathrm{tr}((Av)\otimes w)=\langle Av,w\rangle\,,
	$$
	we obtain
	\begin{align}
	\E[\psi(t,X_t^i,Y_t^i)] &= \E\left[\int_{t_0}^t\espos\langle\big(a^{-1}(X_s)+a^{-1}(Y_s)\big)(X_s-Y_s),X_s-Y_s\rangle A_s\,ds\right]\notag\\
	&+\E\left[\int_{t_0}^t\espos\big(B_s+C_s+D_s+E_s+F_s+G_s+H_s\big)\,ds\right]\label{eq:ito}\\
	&+\E\left[\int_{t_0}^t\espos\big(L_s\,d|k|_s^{i,X}+M_s\,d|k|_s^{i,Y}\big)\right]\notag\,,
	\end{align}
	where
	\begin{align*}
	A_s =&-\alpha\delta+\alpha H_p(X_s^i,D_{x_i}u^N_i(s,\bo X_s))\cdot Dd(X_s^i) + \alpha H_p(Y_s^i,D_{x_i}v^N_i(s,\bo Y_s))\cdot Dd(Y_s^i)\\
	&+\alpha^2|\sigma^*(X_s^i)Dd(X_s^i)+\sigma^*(Y_s^i)Dd(Y_s^i)|^2-\alpha\,\mathrm{tr}(a(X_s^i)D^2d(X_s^i))-\alpha\,\mathrm{tr}(a(Y_s^i)D^2d(Y_s^i))\,;\\
	B_s =&-\left\langle Da^{-1}(X_s^i)(X_s^i-Y_s^i),X_s^i-Y_s^i\right\rangle\cdot H_p(X_s^i,D_{x_i}u^N_i(t,\bo{X_s}))\\
	&-\left\langle Da^{-1}(Y_s^i)(X_s^i-Y_s^i),X_s^i-Y_s^i\right\rangle\cdot H_p(Y_s^i,D_{x_i}v^N_i(t,\bo{Y_s}))\,;\\
	C_s =&-2\alpha\left\langle \sigma^*(X_s^i)Dd(X_s^i)+\sigma^*(Y_s^i)Dd(Y_s^i),\sigma^*(X_s^i)\langle Da^{-1}(X_s^i)(X_s^i-Y_s^i),X_s^i-Y_s^i\rangle\right\rangle\\
	&-2\alpha\left\langle \sigma^*(X_s^i)Dd(X_s^i)+\sigma^*(Y_s^i)Dd(Y_s^i),\sigma^*(Y_s^i)\langle Da^{-1}(Y_s^i)(X_s^i-Y_s^i),X_s^i-Y_s^i\rangle\right\rangle;\\
	D_s =&\hspace{0.5cm} 2(a^{-1}(X_s^i)+a^{-1}(Y_s^i))(X_s^i-Y_s^i)\cdot\left(H_p(Y_s^i,D_{x_i}v^N_i(s,\bo{Y}_s))-H_p(X_s^i,D_{x_i}u^N_i(s,\bo{X}_s))\right)\,;\\
	E_s=&-4\alpha\left\langle \sigma^*(X_s^i)Dd(X_s^i)+\sigma^*(Y_s^i)Dd(Y_s^i),(\sigma^*(X_s^i)-\sigma^*(Y_s^i))(a^{-1}(X_s^i)+a^{-1}(Y_s^i))(X_s^i-Y_s^i)\right\rangle;\\
	F_s=&\hspace{0.5cm}\mathrm{tr}(a(X_s^i)\langle D^2a^{-1}(X_s^i)(X_s^i-Y_s^i),X_s^i-Y_s^i\rangle)+\mathrm{tr}(a(Y_s^i)\langle D^2a^{-1}(Y_s^i)(X_s^i-Y_s^i),X_s^i-Y_s^i\rangle);\\
	G_s=&\hspace{0.5cm}4\left\langle\left(\mathrm{tr}\left((\sigma(X_s^i)-\sigma(Y_s^i))\left(\sigma^*(X_s^i)\mathrm{Jac}(a_k^{-1}(X_s^i))+sigma^*(X_s^i)\mathrm{Jac}(a_k^{-1}(X_s^i))\right)_k\right)\right),X_s^i-Y_s^i\right\rangle;\\
	H_s=&\hspace{0.5cm}\mathrm{tr}\left((\sigma(X_s^i)-\sigma(Y_s^i))(\sigma^*(X_s^i)-\sigma^*(Y_s^i))(a^{-1}(X_s^i)+a^{-1}(Y_s^i))\right)\,;\\
	L_s=&\hspace{0.5cm}a(X_s^i)Dd(X_s^i)\cdot\left[-\alpha Dd(X_s^i)\left\langle(a^{-1}(X_s^i)+a^{-1}(Y_s^i))(X_s^i-Y_s^i),X_s^i-Y_s^i\right\rangle\right.\\
	&+\left.\left\langle Da^{-1}(X_s^i)(X_s^i-Y_s^i),X_s^i-Y_s^i\right\rangle+2(a^{-1}(X_s^i)+a^{-1}(Y_s^i))(X_s^i-Y_s^i)\right]\,;\\
	M_s=&\hspace{0.5cm}a(Y_s^i)Dd(Y_s^i)\cdot\left[-\alpha Dd(Y_s^i)\left\langle(a^{-1}(X_s^i)+a^{-1}(Y_s^i))(X_s^i-Y_s^i),X_s^i-Y_s^i\right\rangle\right.\\
	&+\left.\left\langle Da^{-1}(Y_s^i)(X_s^i-Y_s^i),X_s^i-Y_s^i\right\rangle+2(a^{-1}(X_s^i)+a^{-1}(Y_s^i))(X_s^i-Y_s^i)\right]\,.
	\end{align*}
	We take $\alpha>1$ and we start by analyzing the terms in the deterministic part. As regards $A_s$, using the boundedness of the coefficients we immediately obtain
	$$
	A_s\le-\alpha\delta+C\alpha^2\,.
	$$
	For the other terms, the hypotheses on the coefficients (in particular the Lipschitz bound on $\sigma$) easily implies
	$$
	|B_s|+|C_s|+|E_s|+|F_s|+|G_s|+|H_s|\le C|X_s^i-Y_s^i|^2\,.
	$$
	As regards $D_s$, we have
	\begin{align*}
	|D_s| &\le C|X_s^i-Y_s^i|\cdot\big|H_p(Y_s^i,D_{x_i}v^N_i(s,\bo Y_s))-H_p(Y_s^i,D_{x_i}u^N_i(s,\bo Y_s))\big|\\
	&+C|X_s^i-Y_s^i|\cdot\big|H_p(Y_s^i,D_{x_i}u^N_i(s,\bo Y_s))-H_p(X_s^i,D_{x_i}u^N_i(s,\bo Y_s))\big|\\
	&+C|X_s^i-Y_s^i|\cdot\big|H_p(X_s^i,D_{x_i}u^N_i(s,\bo Y_s))-H_p(X_s^i,D_{x_i}u^N_i(s,\bo X_s))\big|\\
	&\le C|X_s^i-Y_s^i|\cdot\left(|X_s^i-Y_s^i|+\frac 1N\sum\limits_{j\neq i}|X_s^j-Y_s^j|+|D_{x_i}v^N_i(s,\bo Y_s)-D_{x_i}u^N_i(s,\bo Y_s)|\right)\\
	&\le C|X_s^i-Y_s^i|^2+\frac CN\sum\limits_{j\neq i}\big(|X_s^i-Y_s^i|\cdot|X_s^j-Y_s^j|\big)+C|D_{x_i}v^N_i(s,\bo Y_s)-D_{x_i}u^N_i(s,\bo Y_s)|^2\,.
	\end{align*}
	Now we focus ourselves on the reflecting terms. The uniform ellipticity of $a$ implies
	$$
	\langle a(X_s^i)Dd(X_s^i),Dd(X_s^i)\rangle\ge\lambda\,,\quad\langle(a^{-1}(X_s^i)+a^{-1}(Y_s^i))(X_s^i-Y_s^i),X_s^i-Y_s^i\rangle\ge2\mu^{-1}|X_s^i-Y_s^i|^2\,.
	$$
	We obtain
	\begin{align*}
	L_s\le -2\alpha\lambda\mu^{-1}|X_s^i-Y_s^i|^2+C|X_s^i-Y_s^i|^2 &+2Dd(X_s^i)\cdot(X_s^i-Y_s^i)\\
	&+2Dd(X_s^i)\cdot\big(a(X_s^i)a^{-1}(Y_s^i)(X_s^i-Y_s^i)\big)\,.
	\end{align*}
	The last term can be written as
	$$
	2Dd(X_s^i)\cdot(X_s^i-Y_s^i)+2Dd(X_s^i)\cdot[(a(X_s^i)-a(Y_s^i))a^{-1}(Y_s^i)(X_s^i-Y_s^i)]\,.
	$$
	This estimate implies, up to changing the constant $C$,
	$$
	L_s\le(-2\alpha\lambda\mu^{-1}+C)|X_s^i-Y_s^i|^2+4Dd(X_s^i)\cdot(X_s^i-Y_s^i)\,.
	$$
	In order to estimate the right-hand side term, we use a Taylor expansion of the function $d(\cdot)$. We have
	$$
	d(X_s^i)=d(Y_s^i)+Dd(X_s^i)\cdot(X_s^i-Y_s^i)+\miezz D^2d(\xi)(X_s^i-Y_s^i)\cdot(X_s^i-Y_s^i)\,,
	$$
	where $\xi=X_s+k(Y_s-X_s)$, for a certain $0\le k\le 1$. So we get
	$$
	Dd(X_s^i)\cdot(X_s^i-Y_s^i)\le d(X_s^i)-d(Y_s^i)+C|X_s^i-Y_s^i|^2\,.
	$$
	Since the reflecting process $k^{i,X}_\cdot$ takes values in the set $\{X_\cdot\in\partial\Omega\}$, we have in this set $d(X_s^i)=0$. This means, up to changing $C$,
	$$
	L_s\le (-2\alpha\lambda\mu^{-1}+C)|X_s^i-Y_s^i|^2-4d(Y_s^i)\le 0\,,
	$$
	for $\alpha$ sufficiently large. In the same way we can prove $M_s\le 0$ in the set $\{Y_\cdot\in\partial\Omega\}\,$.
	
	Now we come back to \eqref{eq:ito}. The uniform ellipticity of $a$ implies, for a certain $C>0\,,$
	\begin{gather*}
	\psi(t,X_t^i,Y_t^i)\ge 2e^{-\alpha(\delta T+C)}\mu^{-1}|X_t^i-Y_t^i|^2\,,\\
	\left\langle\big(a^{-1}(X_s^i)+a^{-1}(Y_s^i)\big)(X_s^i-Y_s^i),X_s^i-Y_s^i\right\rangle\le2\lambda^{-1}|X_s^i-Y_s^i|^2\,.
	\end{gather*}
	Since $A_s\le 0$ for $\delta$ sufficiently large (depending on $\alpha$), collecting all the estimates we obtain, up to changing $C$,
	\begin{gather}
	2e^{-\alpha(\delta T+C)}\mu^{-1}\E[|X_t^i-Y_t^i|^2]\le\left(2\lambda^{-1}(-\alpha\delta+C\alpha^2)+C\right)\int_{t_0}^t\E[|X_s^i-Y_s^i|^2]\,ds\label{eq:itofatto}\\
	+\frac CN\int_{t_0}^t\E\left[\sum\limits_{j\neq i}(|X_s^i-Y_s^i|\cdot|X_s^j-Y_s^j|)\right]ds+C\int_{t_0}^t\E[|D_{x_i}v^N_i(s,\bo Y_s)-D_{x_i}u^N_i(s,\bo Y_s)|^2]ds\,.\notag
	\end{gather}
	The last term is immediately estimated with \eqref{eq:stimafeedback}. Moreover, since $\alpha>1$, the coefficient of the first term in the right-hand side can be written, up to changing $C$ depending on $\lambda$, as $2\lambda^{-1}(-\alpha\delta+C\alpha^2)\,$.
	
	Fix $K>0$, whose value will be given later. We choose $\delta>C\alpha+\frac{K\lambda}{2\alpha}$ and we have $2\lambda^{-1}(-\alpha\delta+C\alpha^2)\le -K\,$. Putting these estimations in \eqref{eq:itofatto} we obtain
	\begin{align}
	2e^{-\alpha(\delta T+C)}\mu^{-1}\E[|X_t^i-Y_t^i|^2]\le &-K\int_{t_0}^t\E[|X_s^i-Y_s^i|^2]\,ds\notag\\
	&+\frac CN\int_{t_0}^t\E\left[\sum\limits_{j\neq i}\big(|X_s^i-Y_s^i|\cdot|X_s^j-Y_s^j|\big)\right] ds+\frac C{N^2}\,.\label{eq:quasifinito}
	\end{align}
	Summing over $i\in[1,N]$, we get
		\begin{align*}
		2e^{-\alpha(\delta T+C)}\mu^{-1}\sum_{i=1}^N\E[|X_t^i-Y_t^i|^2]\le &-K\int_{t_0}^t\sum_{i=1}^N\E[|X_s^i-Y_s^i|^2]\,ds\\
		&+\frac CN\int_{t_0}^t\sum_{i,j=1}^N\E\left[|X_s^i-Y_s^i|\cdot|X_s^j-Y_s^j|\right] ds+\frac C{N}\,.
	\end{align*}
	We estimate separately the function in the last integral. We have
	$$
	\sum\limits_{i,j=1}^N\E[|X_s^i-Y_s^i|\cdot|X_s^j-Y_s^j|]=\E\left[\left(\sum_{i=1}^N|X_s^i-Y_s^i|\right)^2\right]\le N\sum_{i=1}^N\E[|X_s^i-Y_s^i|^2]\,,
	$$
	which implies
	$$
	2e^{-\alpha(\delta T+C)\mu^{-1}}\sum_{i=1}^N\E[|X_t^i-Y_t^i|^2]\le(C-K)\int_{t_0}^t\sum_{i=1}^N\E[|X_s^i-Y_s^i|^2]\,ds+\frac CN\,.
	$$
	We take $K>C$ and $\delta$ depending on $K$. This means, up to changing $C$,
	$$
	\sum_{i=1}^N\E[|X_t^i-Y_t^i|^2]\le\frac CN\,.
	$$
	With this information we can estimate the sum in \eqref{eq:quasifinito}:
	$$
	\E\left[\sum\limits_{j\neq i}|X_s^i-Y_s^i|\cdot|X_s^j-Y_s^j|\right]\le CN\,\E[|X_s^i-Y_s^i|^2]+\frac CN\,.
	$$
	Plugging these estimations in \eqref{eq:quasifinito} we finally get
	$$
	\E[|X_t^i-Y_t^i|^2]\le C\int_{t_0}^t\E[|X_s^i-Y_s^i|^2]\,ds+\frac C{N^2}\,.
	$$
	Using Gronwall's inequality we obtain \eqref{eq:ultimathule} and we conclude.
	\end{proof}	
		
	\vspace{2cm}
	\textbf{Acknowledgements.} 
	I wish to sincerely thank P. Cardaliaguet and A. Porretta for the help and the support during the preparation of this article. I wish to thank also F. Delarue for the enlightening ideas he gave to me. 
	

\begin{thebibliography}{abc}
		
		\bibitem{golSI} Achdou, Y., Buera, F. J., Lasry, J.-M., Lions, P.-L., Moll, B. (2014).  {\it Partial differential equation models in macroeconomics.} Phil. Trans. R Soc. A 372(2028):20130397. DOI: 10. 1098/rsta.2013.0397.
		
		\bibitem{2SI} Ajtai, M., Komlos, J., Tusn\'{a}dy, G. (1984). {\it On optimal matchings.} Combinatorica, 4(4), 259-264.
		
		\bibitem{ags} Ambrosio,  L., Gigli, N., Savar\'e, G. (2008). \textit{Gradient flows in metric spaces and in the space of probability measures. Second edition.} Lectures in Mathematics ETH Z\"{u}rich. Birkh\"{a}user Verlag, Basel
		
		\bibitem{dybala} Bayraktar, E., Cecchin, A., Cohen, A., Delarue, F. (2019). {\it Finite state mean field games with wright-fisher common noise}. arXiv preprint arXiv:1912.06701.
		
		\bibitem{nuova1SI} Bayraktar, E., Cohen, A. (2018). {\it Analysis of a finite state many player game using its master equation}. SIAM Journal on Control and Optimization, 56(5), 3538-3568.
		
		\bibitem{seiSI} Bensoussan, A., Frehse, J. (2002). \emph{Smooth solutions of systems of quasilinear parabolic equations.} ESAIM: Control, Optimisation and Calculus of Variations, 8, 169-193.
		
		\bibitem{14} Bensoussan, A., Frehse, J., Yam. S.C.P. (2015). {\it The Master Equation in mean field theory}. J. Math. Pures et Appliqu\'ees, 103, 1441-1474.
		
		\bibitem{15} Bensoussan, A., Frehse, J., Yam, S.C.P. (2017). {\it On the interpretation of the Master Equation}. Stoc. Proc. App., 127, 2093-2137.
		
		\bibitem{dieciSI} Bensoussan, A., Lions, P.-L. (1982). \emph{Contr\^{o}le Impulsionnel et In\'equations Quasi-Variationnelles}, Dunod, Paris.
		
		\bibitem{gomezSI} Bertucci, C. (2020). {\it Monotone solutions for mean field games master equations: finite state space and optimal stopping.} arXiv preprint arXiv:2007.11854.
		
		\bibitem{cardconvtrajSI} Briand, P., Cardaliaguet, P., Chaudru De Raynal, P.-E., Hu, Y. (2020). {\it Forward and Backward Stochastic Differential Equations with Normal Constraints in Law}. Stochastic Processes and their Applications, 130, 7021-7097.
		
		\bibitem{bucchinSI} Buckdahn, R., Li, J., Peng, S., Rainer, C. (2017). {\it Mean-field stochastic differential equations and associated PDEs.} Ann. Probab., 45, 824-878.
		
		\bibitem{28SI} Chassagneux, J.F., Crisan, D., Delarue, F. (2014). {\it Classical solutions to the Master Equation for large population equilibria}. arXiv preprint arXiv:1411.3009.
		
		\bibitem{cardconvSI} Cardaliaguet, P. (2017).  \textit{The convergence problem in mean field games with local coupling}. Applied Mathematics \& Optimization, 76(1), 177-215.
		
		\bibitem{nuova14SI}
		{ Cardaliaguet, P., Cirant, M., Porretta, A.} (2018). {\em Remarks on nash
			equilibria in mean field game models with a major player}, arXiv preprint
		arXiv:1811.02811.
		
		\bibitem{cardSI} Cardaliaguet, P., Delarue, F., Lasry, J.-M., Lions, P.-L. (2019). \textit{The Master Equation and the Convergence Problem in Mean Field Games}. Annals of Mathematics Studies, Vol. 2.
		
		\bibitem{resultunoSI} Carmona, R., Delarue, F. (2013). \textit{Probabilist analysis of Mean Field Games}. SIAM Journal on Control and Optimization, 51(4), 2705-2734.
		
		\bibitem{24} Carmona, R., Delarue, F. (2014). {\it The Master Equation for large population equilibriums}. Stochastic Analysis and Applications 2014, Editors: D. Crisan, B. Hambly, T. Zariphopoulou. Springer.
		
		\bibitem{resultunemmiezzSI} Carmona, R., Delarue, F. (2017). \textit{Probabilistic theory of mean field games with applications}. Springer Verlag.
		
		\bibitem{sonocarmelaSI} Carmona, R., Delarue, F. (2018). \textit{The Master Field and the Master Equation. Probabilistic Theory of
		Mean Field Games with Applications II}. Springer, Cham, 239-321.
		
		\bibitem{loackerSI} Carmona, R., Delarue, F., Lacker, D. (2016). {\it Probabilistic analysis of mean field games with a common noise}. Ann. Probab, 44, 3740-3803.
		
		\bibitem{cicciocaputoSI} Cecchin, A., Delarue, F. (2020). {\it Selection by vanishing common noise for potential finite state mean field games}. arXiv preprint arXiv:2005.12153.
		
		\bibitem{nuova11SI} Cecchin, A., Pelino, G. (2019). {\it Convergence, fluctuations and large deviations for finite state mean field games via the master equation}. Stochastic Processes and their Applications, 129(11), 4510-4555.
		
		\bibitem{checchinoSI} Cecchin, A., Pra, P.D., Fischer, M., Pelino, G. (2019).  {\it On the convergence problem in mean field games: a
		two state model without uniqueness.} SIAM Journal on Control and Optimization, 57(4), 2443-2466.
		
		\bibitem{nuova4SI} Delarue, F., Lacker, D., Ramanan, K. (2018). {\it From the master equation to mean field game limit theory: Large deviations and concentration of measure}. arXiv preprint arXiv:1804.08550.
		
		\bibitem{ramadanSI} Delarue, F.,  Lacker, D., Ramanan, K. (2019). {\it From the master equation to mean field game limit theory: a central limit theorem}. Electron. J. Probab. 24, no. 51, 1-54.
		
		\bibitem{cingulSI} Delfour, M.C., Zolesio, J.-P. (1994). {\it Shape analysis via oriented distance function}.  J.  Funct. Anal.   123,  129-201.
		
		\bibitem{30SI} Dereich, S., Scheutzow, M., Schottstedt, R. (2013). {\it Constructive quantization: approximation by empirical measures.} Annales de l'IHP, Probabilit\'es et Statistiques, 49(4), 1183-1203.
		
		\bibitem{jedi} Djete, M. F. (2020). \emph{Mean field games of controls: on the convergence of Nash equilibria} . arXiv preprint arXiv:2006.12993.
		
		\bibitem{chicazzosieteSI} Doncel, J., Gast, N., Gaujal, B. (2019).  {\it Discrete mean field games: Existence of equilibria and convergence.} Journal of Dynamics \& Games 6(3), 221-239.
		
		\bibitem{elshaaSI} El Karoui, N., and Chaleyat-Maurel, M. (1978). \emph{Un probl\`eme de r\'eflexion et ses applications au temps local et aux  \'equations diff\'erentielles stochastiques sur $\R$, cas continu}. Temps Locaux, Ast\'erisque, 52-53, 117-144.
		
		\bibitem{fishSI} Fischer, M. (2017). \emph{On the connection between symmetric n-player games and mean field games}. The
		Annals of Applied Probability, 27(2), 757-810.
		
		\bibitem{35SI} Fournier, N., Guillin, A. (2015). {\it On the rate of convergence in Wasserstein distance of the empirical measure}. Probability Theory and Related Fields, 162(3), 707-738.
		
		\bibitem{fifa21SI} Gangbo, W., M\'esz\'{a}ros, A. R. (2020). \textit{Global well-posedness of Master Equations for deterministic displacement convex potential mean field games.} arXiv preprint arXiv:2004.01660.
		
		\bibitem{tonaliSI} Gangbo, W., M\'{e}sz\'{a}ros, A. R., Mou, C., Zhang, J. (2021). \textit{Mean Field Games Master Equations with Non-separable Hamiltonians and Displacement Monotonicity.} arXiv preprint arXiv:2101.12362.
		
		\bibitem{nuova16SI}
		{Gangbo, W., Swiech, A. (2015).}, {\em Existence of a solution to an equation
			arising from the theory of mean field games}, Journal of Differential
		Equations, 259, pp.~6573--6643.
		
		\bibitem{gueantSI} Gu\'{e}ant, O., Lasry, J.-M., Lions, P.-L. (2011). \emph{Mean field games and applications}. In Paris-Princeton Lectures on Mathematical Finance 2010. Lecture Notes in Mathematics, Springer Berlin ed., 2003, 205-266.
		
		\bibitem{HCMSI} Huang, M.,  Caines, P.E.,  Malham\'e,  R.P. (2006).   {\it Large population stochastic dynamic games: closed-loop McKean-Vlasov systems and the Nash certainty equivalence principle},  Comm. Inf. Syst. {\bf 6}, 221--251.
		
		\bibitem{resultdueSI} Huang, M.,  Caines, P.E.,  Malham\'e,  R.P. (2007).   {\it Large population Cost-Coupled LQG Problems With Nonuniform Agents: Individual-Mass Behavior and Decentralized $\eps$-Nash Equilibria}. IEEE Transactions on Automatic Control, 52(9), 1560-1571.
		
		\bibitem{resulttreSI} Kolokoltsov, V.N., Li, J., Yang, W. (2011). \textit{Mean Field Games and nonlinear Markov Processes}. Preprint arXiv:1112.3744.
		
		\bibitem{loackerSI} Lacker, D. (2016). \textit{A general characterization of the mean field limit for stochastic differential games.} Probability Theory and Related Fields, 165, 581-648.
		
		\bibitem{loacker2SI} Lacker, D., (2020). \textit{On the convergence of closed-loop Nash equilibria to the mean field game limit}. Ann. Appl. Probab. 30(4): 1693-1761.
		
		\bibitem{lsuSI} Lady\v{z}enskaja, O.A., Solonnikov, V.A., Ural'ceva, N.N. (1967). \textit{Linear and Quasi-linear Equations of Parabolic Type}. Translations of Mathematical Monographs, Vol. 23, American Mathematical Society, Providence R.I..
		
		\bibitem{LL1SI} Lasry, J.-M., Lions, P.-L. (2006). {\it Jeux \`a champ moyen. I. Le cas stationnaire.}
		C. R. Math. Acad. Sci. Paris  343, 619--625.
		
		\bibitem{LL2SI} Lasry, J.-M., Lions, P.-L. (2006). {\it Jeux \`a champ moyen. II. Horizon fini et contr$\hat{o}$le optimal.}
		C. R. Math. Acad. Sci. Paris  343, 679--684.
		
		\bibitem{LL-japanSI} Lasry, J.-M., Lions, P.-L. (2007).  {\it Mean field games.}  Jpn. J. Math.  2 , no. 1, 229--260.
		
		\bibitem{LL3SI} Lasry, J.-M., Lions, P.-L., Gu\`eant, O. (2011). {\it Application of Mean Field Games to Growth Theory.}
		In: Paris-Princeton lectures on mathematical finance; Lecture notes in Mathematics. Springer, Berlin.
		
		\bibitem{prontoprontoprontoSI} Lions, P.-L. {\it Cours au Coll\`ege de France}. www.college-de-france.fr\,.
		
		\bibitem{55SI} Lions, P.-L., Menaldi., J.L., and Sznitman, A.S. (1981). \emph{Construction de processus de diffusion r\'efl\'echis par
		p\'enalisation du domaine}. Comptes-Rendus Paris, 292, 559-562.
		
		\bibitem{snitzmanSI} Lions, P.-L., Snitzman, A.S. (1984). \emph{Stochastic Differential Equations with Reflecting Boundary Conditions}. Communications on Pure and Applied Mathematics, 27. 511-537.
		
		\bibitem{koulibalySI} Mayorga, S. (2020). \textit{Short time solution to the master equation of a first order mean field game.} Journal of Differential Equations, 268(10), 6251-6318.
		
		\bibitem{durrSI} Nutz, M., Zhang, Y. (2019). {\it A mean field competition}. Mathematics of Operations Research, 44(4), 1245-1263.
		
		\bibitem{memedesimoSI} Ricciardi, M. (2021). \textit{The Master Equation in a Bounded Domain with Neumann Conditions}. Communications in Partial Differential Equations, DOI: 10.1080/03605302.2021.2008965

		\bibitem{63SI} Skorokhod, A. V. (1961). \emph{Stochastic equations for diffusion processes in a bounded region. 1}. Teor. Veroyatnost. i Primenen, 6(3), 287-298.
		
		\bibitem{63bisSI} Skorokhod, A. V. (1962). \emph{Stochastic equations for diffusion processes in a bounded region. 2}. Teor. Veroyatnost. i Primenen., 7(1), 5-25.
		
		\bibitem{64SI} Stroock, D. W., and Varadhan, S. R. S. (1971). \emph{Diffusion Processes with boundary conditions}. Comm. Pure Appl. Math., 24, 147-225.
		
		\bibitem{65SI} Tanaka, H. (1979). \emph{Stochastic differential equations with reflecting boundary condition in convex regions}. Hiroshima Math. J., 9, 163-177.
		
		
	\end{thebibliography}
\end{document}